\newcommand{\one}{\ensuremath{\mathds{1}}}
\newcommand{\C}{\mathbb{C}}
\newcommand{\N}{\mathbb{N}}
\newcommand{\R}{\mathbb{R}}
\newcommand{\T}{\mathbb{T}}
\newcommand{\Z}{\mathbb{Z}}
\newcommand{\Ind}{\operatorname{Ind}}
\newcommand{\I}{\mathcal{I}}
\newcommand{\F}{\mathcal{F}}
\newcommand{\Arg}{\operatorname{Arg}}
\newcommand{\End}{\operatorname{End}}
\newcommand{\Iso}{\operatorname{Iso}}
\newcommand{\Tr}{\operatorname{Tr}}
\newcommand{\Per}{\operatorname{Per\, \Lambda}}
\newcommand{\BB}{\mathcal{B}}
\newcommand{\GG}{\mathcal{G}}
\newcommand{\HH}{\mathcal{H}}
\newcommand{\GL}{\mathcal{G}_\Lambda}
\newcommand{\Gx}{\mathcal{G}^x_x}
\newcommand{\GU}{\mathcal{G}^{(0)}}
\newcommand{\CGx}{C^*(\mathcal{G}^x_x,\BB)}
\newcommand{\CG}{C^*(\mathcal{G},\BB)}
\newcommand{\PCGx}{\Gamma_c(\mathcal{G}^x_x;\BB)}
\newcommand{\PCG}{\Gamma_c(\mathcal{G};\BB)}
\newcommand{\Gzero}{\Gamma_0(\mathcal{G}^{(0)};\BB)}
\newcommand{\LL}{\mathcal{L}}
\newcommand{\PP}{\mathcal{P}}
\newcommand{\HL}{\mathcal{H}_\Lambda}
\newcommand{\FF}{\mathcal{F}}
\newcommand{\supp}{\operatorname{supp}}
\newcommand{\clsp}{\overline{\operatorname{span}}}
\newtheorem{thm}{Theorem}
\newtheorem{lemma}[thm]{Lemma}
\newtheorem{cor}[thm]{Corollary}
\theoremstyle{definition}
\newtheorem{definition}[thm]{Definition}
\newtheorem{remark}[thm]{Remark}
\numberwithin{equation}{section}
\numberwithin{thm}{section}
\title[KMS states]{\boldmath{KMS states  on the $C^*$-algebras of Fell Bundles over groupoids}}
\date{2 August 2017}
\author{Zahra Afsar}
\author{Aidan Sims}
\address{Zahra Afsar and Aidan Sims, School of Mathematics and Applied Statistics, University of Wollongong,
NSW 2522, Australia}
\email{\{zafsar, asims\}@uow.edu.au}
\thanks{This research was supported by the Australian
Research Council, grant number DP150101595.}
\begin{document}

\begin{abstract}
We consider fibrewise singly generated Fell-bundles over \'etale groupoids. Given a
continuous real-valued 1-cocycle on the groupoid, there is a natural dynamics on the
cross-sectional algebra of the Fell bundle. We study the Kubo--Martin--Schwinger
equilibrium states for this dynamics. Following work of Neshveyev on equilibrium states
on groupoid $C^*$-algebras, we describe the equilibrium states of the cross-sectional
algebra in terms of measurable fields of traces on the $C^*$-algebras of the restrictions
of the Fell bundle to the isotropy subgroups of the groupoid. As a special case, we
obtain a description of the trace space of the cross-sectional algebra. We apply our
result to generalise Neshveyev's main theorem to twisted groupoid $C^*$-algebras, and
then apply this to twisted $C^*$-algebras of strongly connected finite $k$-graphs.

\end{abstract}
\keywords{Groupoid; Fell bundle; $k$-graph; cocycle; KMS state.} \subjclass[2010]{46L35}

\maketitle

\section{Introduction}

The study of KMS states of $C^*$-algebras was originally motivated by applications of
$C^*$-dynamical systems to the study of quantum statistical mechanics \cite{BR}. However,
KMS states make sense for any $C^*$-dynamical system, even if it does not model a
physical system, and there is significant evidence that the KMS data is a useful
invariant of a dynamical system. For example, the results of Enomoto, Fujii and Watatani
\cite{EFW} show that the KMS data for a Cuntz--Krieger algebra encodes the topological
entropy of the associated shift space. And Bost and Connes showed that the Riemann zeta
function can be recovered from the KMS states of an appropriate $C^*$-dynamical system
\cite{BC}. As a result there has recently been significant interest in the study of KMS
states of $C^*$-dynamical systems arising from combinatorial and algebraic data \cite{BC,
CarlsenLarsen, N, Thomsen, Kakariadis}. In particular, there are indications of a close
relationship between KMS structure of such systems, and ideal structure of the
$C^*$-algebra \cite{aHLRS, LLNSW, Yang}.

Our original motivation in this paper was to investigate whether the relationship,
discovered in \cite{aHLRS}, between simplicity and the presence of a unique state for the
$C^*$-algebra of a strongly connected $k$-graph persists in the situation of twisted
higher-rank graph $C^*$-algebras. The methods used to establish this in \cite{aHLRS}
exploit direct calculations with the generators of the $C^*$-algebra. Unfortunately, a
similar approach seems to be more or less impossible in the situation of twisted
$k$-graph $C^*$-algebras, because the twisting data quickly renders the calculations
required unmanageable.

Instead we base our approach on groupoid models for $k$-graph $C^*$-algebras and their
analogues. Building on ideas from \cite{KR}, Neshveyev proved in \cite{N} that the KMS
states of a groupoid $C^*$-algebra for a dynamics induced by a continuous real-valued
cocycle on the groupoid are parameterised by pairs consisting of a suitably invariant
measure $\mu$ on the unit space, and an equivalence class of $\mu$-measurable fields of
traces on the $C^*$-algebras of the fibres of the isotropy bundle that are equivariant
for the natural action of the groupoid by conjugation. Though Neshveyev's results are not
used directly to compute the KMS states of $k$-graph algebras in \cite{aHLRS}, it is
demonstrated in \cite[Section~12]{aHLRS} that the main results of that paper could be
recovered using Neshveyev's theorems.

Every twisted $k$-graph algebra can be realised as a twisted groupoid $C^*$-algebra
\cite{KPS1}, and simplicity of twisted $k$-graph algebras can be characterised using this
description \cite{KPS2}. Twisted $k$-graph $C^*$-algebras are in turn a special case of
cross-sectional algebras of Fell bundles over groupoids. Since the latter constitute a
very flexible and widely applicable model for $C^*$-algebraic representations of
dynamical systems, we begin by generalising Neshveyev's theorems to this setting; though
since it simplifies our results and since it covers our key example of twisted groupoid
$C^*$-algebras, we restrict to the situation of Fell bundles whose fibres are all singly
generated. Neshveyev's approach relies heavily on Renault's Disintegration Theorem
\cite{Re}, and we likewise rely very heavily on the generalisation of the Disintegration
Theorem to Fell-bundle $C^*$-algebras established by Muhly and Williams \cite{MW}.

Our first main theorem, Theorem~\ref{thm2}, is a direct analogue in the situation of Fell
bundles of Neshveyev's result. It shows that the KMS states on the cross-sectional
algebra of a Fell bundle $\BB$ with singly generated fibres over an \'etale groupoid
$\GG$ are parameterised by pairs consisting of a suitably invariant measure $\mu$ on
$\GU$ and a $\mu$-measureable field of traces on the $C^*$-algebras $C^*(\Gx, \BB)$ of
the restrictions of $\BB$ to the isotropy groups of $\GG$ that satisfies a suitable
$\GG$-invariance condition. By applying this result with inverse temperature equal to
zero, we obtain a description of the trace space of $C^*(\GG, \BB)$.

Given a continuous $\T$-valued $2$-cocycle $\sigma$ on $\GG$, or more generally a twist
over $\GG$ in the sense of Kumjian \cite{Kum}, there is a Fell line-bundle over $\GG$
whose cross-sectional algebra coincides with the twisted $C^*$-algebra $C^*(\GG,\sigma)$
(see Lemma~\ref{Fell-groupoid}). We apply Theorem~\ref{thm2} to such bundles to obtain a
generalisation of Neshveyev's results \cite[Theorem~1.2 and Theorem~1.3]{N} to twisted
groupoid $C^*$-algebras (see Corollary~\ref{kms states for groupoid}).

We next consider a strongly connected $k$-graph $\Lambda$ in the sense of \cite{KP}.
There is only one probability measure $M$ on the unit space $\GL^{(0)} = \Lambda^\infty$
that is invariant in the sense described above \cite[Lemma~12.1]{aHLRS}. Given a cocycle
$c$ on $\Lambda$, Kumjian, Pask and the second author introduced a twisted $C^*$-algebra
$C^*(\Lambda,c)$ and showed that the cocycle $c$ induces a cocycle $\sigma_c$ on the
associated path groupoid $\GL$ such that the $C^*$-algebras $C^*(\Lambda,c)$ and
$C^*(\GL,\sigma_c)$ are isomorphic \cite[Corollary~7.9]{KPS1}. The cocycle $\sigma_c$
determines an antisymmetric bicharacter $\omega_c$ on $\Per$ (see \cite{OPT} or
\cite[Proposition~3.1]{KPS2}). The trace simplex of $C^*(\Per, \sigma_c)$ is canonically
isomorphic to the state space of the commutative subalgebra $C^*(Z_{\omega_c})$ of the
centre of the bicharacter $\omega_c$ (see Lemma~\ref{lemma:traces}). Conjugation in the
line-bundle associated to $\sigma_c$ determines an action of the quotient $\HL$ of $\GL$
by the interior of its isotropy on $\Lambda^\infty \times Z_{\omega_c}$. Kumjian, Pask
and the second author showed that $C^*(\Lambda, c)$ is simple if and only if this action
is minimal. Here we prove that the KMS states of $C^*(\Lambda, c)$ are parameterised by
$M$-measurable fields of traces on $C^*(Z_{\omega_c})$ that are invariant for the same
action of $\HL$. Unfortunately, however, we have been unable to prove that minimality of
the action implies that it admits a unique invariant field of traces.

\medskip

We begin with a section on preliminaries. We show if $\omega$ is an antisymmetric
bicharacter on a finitely generated free abelian group $F$ that is cohomologous to a
cocycle  on $P$, then the trace spaces of $C^*(P,\omega)$ and $C^*(Z_\omega)$ are
isomorphic. In Section~\ref{sec:KMS-Fell}, we prove our main theorems about the KMS
states on the cross-sectional algebra of a Fell bundle. In Section~\ref{sec:KMS twisted
G}, we construct a Fell bundle from a cocycle on a groupoid, and use our results in
Section~\ref{sec:KMS-Fell} to obtain a twisted version of Neshveyev's results in
\cite{N}. Section~\ref{sec:KMS k-graph} contains our results about the preferred dynamics
on the twisted $C^*$-algebras of $k$-graphs. We finish off by posing the question whether simplicity
of $C^*(\Lambda, c)$ implies that it admits a unique KMS state.

\section{Preliminaries}
Throughout this paper $\T$ is regarded as a multiplicative group with identity $1$.
\subsection{Groupoids}
Let $\GG$ be a locally compact second countable Hausdorff groupoid (see \cite{Re}). For
each $x\in \GU$, we write $\GG^x=r^{-1}(x)$, $\GG_x=s^{-1}(x)$ and
$\GG^x_x=\GG_x\cap\GG^x$. The set $\Iso(\GG):=\bigcup_{x\in \GU}\GG^x_x$ is called the
\textit{isotropy} of $\GG$. We say $\GG$ is \'{e}tale if $r$ and $s$ are local
homeomorphisms.  A bisection of $\GG$ is an open subset $U$ of $\GG$ such that $r|_{U}$
and $s|_{U}$ are homeomorphisms.

A \textit{continuous $\T$-valued 2-cocycle} $\sigma$ on $\GG$ is a continuous function
$\sigma:\GG^2\rightarrow \T$ such that $\sigma(r(\gamma),\gamma)=c(\gamma,s(\gamma))=1$
for all $\gamma\in \GG$ and
$\sigma(\alpha,\beta)\sigma(\alpha\beta,\gamma)=\sigma(\beta,\gamma)\sigma(\alpha,\beta\gamma)$
for all composable  triples $(\alpha,\beta,\gamma)$. We write $Z^2(\GG,\T)$ for the group
of all continuous $\T$-valued 2-cocycles on $\GG$. Let $b:\GG\rightarrow \T$ be a
continuous function such that $b(x)=1$ for all $x\in \GU$. The function  $\delta^1 b:
\GG\times \GG\rightarrow \T$ given by $\delta^1
b(\gamma,\alpha)=b(\gamma)b(\alpha)\overline{b(\gamma\alpha)}$ is a continuous  2-cocycle
and is  called the \textit{$2$-coboundary} associated to $b$. Note that if $b$ is
continuous ,then $\delta^1 b$ is a $\T$-valued 2-cocycle on $\GG$. Two continuous
$\T$-valued 2-cocycles $\sigma,\sigma'$ are \textit{cohomologous} if
$\sigma'\overline{\sigma}=\delta^1b$ for some continuous $b$. A \textit{continuous
$\R$-valued 1-cocycle} $D$ on $\GG$ is   a continuous homomorphism from $D$ to $\R$.

Given $\sigma\in Z^2(\GG,\T)$, the space $C_c(\GG)$  is a $*$-algebra with the involution
and multiplication defined  by
\[
f^*(\gamma):=\overline{\sigma(\gamma,\gamma^{-1})f(\gamma^{-1})} \text{  and }
\]
\[
(fg)(\gamma):=\sum_{\alpha\beta=\gamma}\sigma(\alpha,\beta)f(\alpha)g(\beta)\quad \text{for } f,g\in C_c(\GG).
\]
We denote this $*$-algebra by $C_c(\GG,\sigma)$. The formula
\[\|f\|_I=\max\Big(\sup_{x\in \GU}\sum_{\lambda\in \GG^x}|f(\lambda)|,\sup_{x\in \GU}\sum_{\lambda\in \GG_x}|f(\lambda)| \Big)\]
determines a norm on $C_c(\GG,\sigma)$. By a  $*$-representation  of $C_c(\GG,\sigma)$,
we mean a $*$-homomorphism from $C_c(\GG,\sigma)$ to the bounded operators on a Hilbert
space. The \textit{twisted groupoid} $C^*$-algebra $C^*(\GG,\sigma)$ is the completion of
$C_c(\GG,\sigma)$ in the  universal norm
\[\|f\|:=\sup\{\|L(f)\|: L \text{  is a  $*$-representation of } C_c(\GG,\sigma)\}. \]

A measure $\mu$ on $\GU$ is called \textit{quasi-invariant} if the  measures
\[\nu(f):=\int_{\GU}\sum_{\gamma\in\GG^x}f(\gamma)\,d\mu \quad\text{ and  }\quad \nu^{-1}(f):=\int_{\GU}\sum_{\gamma\in\GG_x}f(\gamma)\,d\mu\]
are equivalent. We write $\Delta_\mu=\frac{d\nu}{d\nu^{-1}}$ for a Radon--Nykodym
derivative of $\nu$ with respect to $\nu^{-1}$. We will call $\Delta_\mu$ the
\textit{Radon--Nykodym cocycle} of $\mu$. Given a bisection $U$ and $x\in \GG^{(0)}$, let
$U^x:=U\cap r^{-1}(x)$. Define $T_U:r(U)\rightarrow s(U)$ by $T(x)=s(U^x)$. To see that a
measure $\mu$ is quasi-invariant it suffices to show that
\[
\int_{r(U)} f(T_U(x))\, d\mu(x)=\int_{s(U)} f(x)\Delta_\mu(U_x)\, d\mu(x)
\]
for all bisections $U$ and all  $f:s(U)\rightarrow \R$.

\subsection{Fell bundles}
Let $C,D$ be $C^*$-algebras.  A $C$--$D$ bimodule $Y$ is said to be a
$C$--$D$-imprimitivity bimodule if it is  a full left Hilbert  $C$-module and a full
right Hilbert  $D$-module; and   for all $y,y',y''\in Y$, $c\in C$ and $d\in D$, we have
\begin{align}\label{imprimitivity}
_C\langle  y\cdot d,y'\rangle=_C\langle y,y'\cdot d^*\rangle,\quad\notag& \langle c\cdot y,y'\rangle_D=\langle y,c^*\cdot y'\rangle_D\text{ and }\\
_C\langle  y,y'\rangle\cdot y''&=y\cdot \langle y',y''\rangle_D.
\end{align}
Let $\GG$ be a locally compact second countable \'{e}tale groupoid. Suppose that
$p:\BB\rightarrow \GG$ is a separable upper-semi continuous Banach bundle over $\GG$ (see
\cite[Definition~A.1]{MW}).  Let
\[\BB^2:=\{(a,b)\in \BB\times\BB:(p(a),p(b))\in \GG^2\}.\]
Following \cite{MW}, we say $\BB$ is a \textit{ Fell bundle} over $\GG$ if there is a
continuous involution $a\mapsto a^*:\BB\rightarrow \BB$ and a continuous bilinear
associative multiplication $(a,b)\mapsto ab:\BB^2\rightarrow \BB$ such that
\begin{itemize}
\item [(F1)] $p(ab)=p(a)p(b)$,
\item [(F2)]$p(a^*)=p(a)^{-1}$,
\item [(F3)]$(ab)^*=b^*a^*$,
\item [(F4)]for each $x\in \GU$, the fibre $B(x)$ is a $C^*$-algebra with respect to
    the $*$-algebra structure given by the above involution and multiplication, and
\item [(F5)] for each $\gamma\in \GG$, $B(\gamma)$ is a
    $B(r(\gamma))$-$B(s(\gamma))$-imprimitivity bimodule with actions induced by the
    multiplication and the inner products
\begin{equation}\label{f5}
_{B(r(\gamma))}\langle a,b\rangle=ab^* \text { and }  \langle a,b\rangle_{B(s(\gamma))}=a^*b.
\end{equation}
\end{itemize}
For $x\in \GU,$ we sometimes write $A(x)$ for the fibre $B(x)$ to emphasis on its
$C^*$-algebraic structure. Given a Fell bundle $\BB$ over $\GG$, we say the fibre
$B(\gamma)$ is \textit{singly generated} if there exists an element $\one_\gamma\in
B(\gamma)$ such that
\[_{A(r(\gamma))}\langle \one_\gamma,\one_\gamma\rangle=\one_\gamma\one_\gamma^*=1_{A(r(\gamma))}, \quad \langle \one_\gamma,\one_\gamma\rangle_{A(s(\gamma))}=\one_\gamma^*\one_\gamma=1_{A(s(\gamma))} \text { and}\]
\[
B(\gamma)=A(r(\gamma)) \one_\gamma=\one_\gamma A(s(\gamma)).
\]
In particular, for $x\in \GU$, the fibre $A(x)$ is singly generated if and only if it is
a unital $C^*$-algebra, and we can then take
 $\one_x=1_{A(x)}$.

A continuous function $f:\GG\rightarrow \BB$ is a \textit{section} if $p\circ f$ is the
identity map on $\GG$. A section $f$ \textit{vanishes at infinity} if the set
$\{\gamma\in \GG:\|f(x)\|\geq \epsilon\}$ is compact for all $\epsilon>0$. We write
$\Gamma_0(\GG;\BB)$ for the completion of the set of sections which vanishes at infinity
with respect to the norm $\|f\|:=\sup_{\gamma\in \GG}\|f(\gamma)\|$.  The space
$\Gamma_0(\GG;\BB)$ is a Banach space, see for example \cite[Proposition~C.23]{W}.

A Fell bundle $\BB$ over $\GG$ has \textit{enough sections} if for every $\gamma\in \GG$
and $a\in \BB(\gamma)$, there is a section $f$ such that
 $f(\gamma)=a$. If $\GG$ is a locally compact Hausdorff space, then   $p:\BB\rightarrow \GG$ has  enough sections, see \cite[Appendix~C]{FD}.

The space $\PCG$ of compactly supported continuous sections is a $*$-algebra with
involution and multiplication given by
\begin{gather}
    f^*(\gamma):=f(\gamma^{-1})^* \text{  and }\label{invo-formula}\\
    f*g(\gamma):=\sum_{\alpha\beta=\gamma}f(\alpha)g(\beta)\quad \text{for } f,g\in \PCG.\label{multi-formula}
\end{gather}
The $I$-norm on $\PCG$ is given by
\[\|f\|_I=\max\Big(\sup_{x\in \GU}\sum_{\lambda\in \GG^x}\|f(\lambda)\|,\sup_{x\in \GU}\sum_{\lambda\in \GG_x}\|f(\lambda)\| \Big).\]
A $*$-homomorphism $L:\PCG\rightarrow B(\HH_L)$ is \textit{$I$-norm decreasing
representation} if $\clsp\{L(f)\xi:f\in \PCG,\xi\in \HH_L=\HH_L\}$ and if $\|L(f)\|\leq
\|f\|_I$ for all $f\in \PCG$. The \textit{universal $C^*$-norm } on $\PCG$ is
\[\|f\|:=\sup\{\|L(f)\|: L \text{  is a  $I$-norm decreasing representation}\}, \]
and $\CG$ is the completion of  $\PCG$ with respect to the universal norm.

Let $\FF$ be a closed subgroupoid of $\GG$.  Then $\BB|_\FF$ is a Fell bundle over $\FF$.
We write $\Gamma_c(\FF;\BB)$ in place of $\Gamma_c(\FF;\BB|_\FF)$ and we denote the
completion $\Gamma_c(\FF;\BB)$ in the universal norm by $C^*(\FF,\GG)$.

Suppose that each fibre in $\BB$ is singly generated. Fix $x\in \GU$. For $u\in \Gx$ and
$a\in B(u)$, let $a\cdot \delta_u\in \Gamma_c(\Gx;\BB)$ be the section given by
\begin{align*}
a\cdot\delta_u(v)=\begin{cases}
a&\text{if $u=v$}\\
	0&\text{otherwise.}\\
	\end{cases}
\end{align*}
Then
\[\CGx=\clsp\{a\cdot\delta_u:u\in \Gx, a\in B(u)\}.\]
In particular $\CGx$ is a unital $C^*$-algebra  with $1_{\CGx}=\one_x\cdot \delta_x.$

\subsection{Representations of Fell bundles and the Disintegration Theorem}
Let $p:\BB\rightarrow \GG$ be a Fell bundle over a locally compact second countable
\'{e}tale groupoid $\GG$. Suppose that $\GU*\HH$ is a Borel Hilbert bundle over $\GU$ as
in \cite[Definition~F.1]{W}.  Let
\[\End(\GU*\HH):=\{(x,T,y): x,y\in \GU, T\in B\big( \HH(y),\HH(x)\big)\}.\]
Following \cite[Definition~4.5]{MW}, we say a map $\hat{\pi}:\BB\rightarrow
\End(\GU*\HH)$ is a \textit{$*$-functor} if each $\hat{\pi}(a)$ has the form
$\hat{\pi}(a)=(r(p(a)),\pi(a),s(p(a)))$ for some
 $\pi(a):\HH(s(p(a)))\rightarrow \HH(r(p(a)))$ such that the maps $\pi(a)$ collectively satisfy
\begin{itemize}
\item [(S1)] $\pi(\lambda a+b)=\lambda\pi(a)+\pi(b)$ if $p(a)=p(b)$,
\item [(S2)] $\pi(ab)=\pi(b)\pi(a)$ whenever $(a,b)\in \BB^2$, and
\item [(S3)]$\pi(a^*)=\pi(a)^*$.
\end{itemize}

A \textit{strict representation} of $\BB$ is a triple $(\mu, \GU*\HH,\hat{\pi})$
consisting of a quasi-invariant measure $\mu$ on $\GU$,  a Borel Hilbert bundle $\GU*\HH$
and a $*$-functor $\hat{\pi}$. For such a triple, we write $L^2(\GU*\HH,\mu)$ for the
completion of the set of all Borel sections $f:\GU\rightarrow \GU*\HH$ with
$\int_{\GU}\langle f(x), f(x)\rangle_{\HH(x)}\,d\mu(x)<\infty$ with respect to
\[\langle f, g\rangle_{L^2(\GU*\HH,\mu)}=\int_{\GU}\langle f(x), g(x)\rangle_{\HH(x)}\,d\mu(x).\]

Let $\Delta_\mu(u)$ be the Radon--Nikodym cocycle for $\mu$. Given a strict
representation $(\mu, \GU*\HH,\hat{\pi})$, Proposition~4.10 \cite{MW} gives an $I$-norm
bounded $*$-homomorphism $L$ on $L^2(\GU*\HH,\mu)$ such that
\begin{equation}\label{integratedform}
\big( L(f)\xi\big|\eta\big)=\int_{\GU}\sum_{u\in\GG^x}\big(\pi(f(u))\xi(s(u))\,\big|\,\eta(r(u))\big) \Delta_\mu(u)^{-\frac{1}{2}}\, d\mu(x).
\end{equation}
 We call $L$ \textit{the integrated form} of $\pi$.
The Disintegration Theorem \cite[Theorem~4.13]{MW} shows that  every non degenerate
representation  $M$   of $\CG$ is equivalent to the integrated form of a strict
representation.

\subsection{Cocycles and bicharacters on groups}\label{sec:bicharacters}

Let $F$ be a group. Viewing $F$ as a groupoid with the discrete topology, we write
$Z^2(F,\T)$ for the set of (continuous) $\T$-valued 2-cocycles on $F$. Given $\sigma\in
Z^2(F,\T)$, define $\sigma^*(p,q)=\overline{\sigma(q,p)}$. Proposition~3.2 of \cite{OPT}
implies that $\sigma,\sigma'\in Z^2(F,\T)$ are cohomologous if and only if
$\sigma\sigma^*=\sigma'\sigma'^*$.

Given  $\sigma\in Z^2(F,\T)$, the  $C^*$-algebra $C^*(F,\sigma)$ is the universal
$C^*$-algebra generated by unitaries $\{W_p :p\in F\}$ satisfying
$W_pW_q=\sigma(p,q)W_{pq}$ for all $p,q\in F$. A standard argument shows that if $\sigma$
and $\sigma'$ are cohomologous in $Z^2(F,\T)$, say $\sigma=\delta^1b \sigma'$,  then the
map $W_p\mapsto b(b)W_p$ descends to an isomorphism from $C^*(F,\sigma)$ onto
$C^*(F,\sigma')$, see for example \cite[Proposition3.5]{SWW}.

A \textit{bicharacter} on $F$ is a  function $\omega:F\times F\rightarrow \T$ such that
the functions $\omega(\cdot,p)$ and $\omega(q,\cdot)$ are homomorphisms.  A bicharacter
$\omega$ is \textit{antisymmetric} if $\omega(p,q)=\overline{\omega(q,p)}$. Each
bicharacter is a $\T$-valued 2-cocycle. If $F$ is a free abelian finitely generated
group, then \cite[Proposition~3.2]{OPT} shows that every $\T$-valued 2-cocycle $\sigma$
on $F$ is cohomologous to a  bicharacter: Let $q_1,\dots ,q_t$ be the generators of $F$.
Define a bicharacter $\omega:F\times F\rightarrow \T$ on  generators by
\begin{equation}\label{omega-formula}
\omega(q_i,q_j)=\begin{cases}
\sigma(q_i,q_j)\overline{\sigma(q_j,q_i)}&\text{if $i>j$.}\\
	1&\text{if $i\leq j$}\\
	\end{cases}
\end{equation}
Then $\omega\omega^*=\sigma\sigma^*$ and by  \cite[Proposition~3.2]{OPT}, $\omega$ is
cohomologous to $\sigma$.

Given $\sigma\in Z^2(F,\T)$, the map $p\mapsto (\sigma\sigma^*)(p,\cdot)$  is a
homomorphism from $F$ into the character space of $F$. Let
\[
Z_\sigma:=\{p\in F: \sigma\sigma^*(p,q)=1 \text{ for all } q\in F\}
\]
be  the kernel of the this  homomorphism. Therefore $Z_\sigma$  is a subgroup of $F$. If
$\omega$ is a bicharacter cohomologous to $\sigma$, then $Z_\omega=Z_\sigma$.

\begin{lemma}\label{lemma:traces}
Suppose that $F$ is a finitely generated free abelian group. Let $\sigma\in
Z^2(F,\mathbb{T})$ and let    $\omega$ be the  bicharacter defined in
\eqref{omega-formula}.  Then
\[\Tr(C^*(F,\sigma))\cong\Tr(C^*(F,\omega))\cong \Tr(C^*(Z_\omega))\cong \Tr(C^*(Z_\sigma)).\]
\end{lemma}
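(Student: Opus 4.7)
My plan is to establish the chain of four isomorphisms in turn, with only the middle one requiring real work.  First, since the construction in \eqref{omega-formula} together with \cite[Proposition~3.2]{OPT} gives that $\sigma$ and $\omega$ are cohomologous in $Z^2(F,\T)$, the standard isomorphism of twisted group $C^*$-algebras noted after \eqref{omega-formula} yields $C^*(F,\sigma)\cong C^*(F,\omega)$, and hence $\Tr(C^*(F,\sigma))\cong\Tr(C^*(F,\omega))$.  At the other end, the identity $Z_\omega=Z_\sigma$ recorded at the end of Section~\ref{sec:bicharacters} immediately gives $\Tr(C^*(Z_\omega))=\Tr(C^*(Z_\sigma))$.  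All the work therefore lies in the middle isomorphism $\Tr(C^*(F,\omega))\cong\Tr(C^*(Z_\omega))$.

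To handle this middle isomorphism I would first trivialise $\omega|_{Z_\omega\times Z_\omega}$.  By the definition of $Z_\omega$, this restriction satisfies $\omega(p,q)=\omega(q,p)$ for all $p,q\in Z_\omega$, so it is a symmetric bicharacter on the finitely generated free abelian group $Z_\omega\cong\Z^m$.  Writing $\omega|_{Z_\omega}(p,q)=\exp(2\pi i\langle p,Aq\rangle)$ for a symmetric matrix $A$ and setting $b(p)=\exp(-\pi i\langle p,Ap\rangle)$, a direct computation gives $\omega|_{Z_\omega}=\delta^1 b$.  Hence $C^*(Z_\omega,\omega|_{Z_\omega})\cong C^*(Z_\omega)$, and the task reduces to identifying $\Tr(C^*(F,\omega))$ with the state space of the (commutative) $*$-subalgebra $\mathcal{Z}:=\clsp\{W_p:p\in Z_\omega\}$ of $C^*(F,\omega)$.

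For this identification I would exploit the conjugation relation $W_qW_pW_q^{*}=\omega\omega^*(q,p)W_p$, which holds in $C^*(F,\omega)$ because $F$ is abelian.  For any tracial state $\tau$, applying $\tau$ to both sides and cancelling gives $\tau(W_p)\bigl(\omega\omega^*(q,p)-1\bigr)=0$ for every $q\in F$, so $\tau(W_p)=0$ whenever $p\notin Z_\omega$, and hence $\tau$ is determined by $\tau|_{\mathcal{Z}}$.  To produce a tracial state from any state $\phi$ on $\mathcal{Z}$ I would use the conditional expectation $E\colon C^*(F,\omega)\to\mathcal{Z}$ obtained by averaging the strongly continuous action of the compact dual group $\widehat{F/Z_\omega}$ on $C^*(F,\omega)$ given by $\alpha_\chi(W_p)=\chi(p+Z_\omega)W_p$; a direct computation on the spanning set shows $E(W_p)=W_p$ for $p\in Z_\omega$ and $E(W_p)=0$ otherwise, and then $\phi\circ E$ is a tracial state extending $\phi$.

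The main obstacle is verifying that $\phi\circ E$ is genuinely tracial, which reduces to the identity $\phi\circ E(W_pW_q)=\phi\circ E(W_qW_p)$ on the spanning set.  Both sides vanish unless $pq\in Z_\omega$, and in that case the bicharacter identity $\omega\omega^*(pq,q)=\omega\omega^*(p,q)\omega\omega^*(q,q)$ combined with $\omega\omega^*(pq,q)=1$ and $\omega\omega^*(q,q)=1$ forces $\omega\omega^*(p,q)=1$, which is exactly the equation $\omega(p,q)=\omega(q,p)$ needed to equate the two sides.  Existence of $E$ as a contractive conditional expectation comes from standard compact-group averaging, and that $\tau\mapsto\tau|_{\mathcal{Z}}$ and $\phi\mapsto\phi\circ E$ are mutual inverses is immediate from the formulas.
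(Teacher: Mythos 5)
Your argument is correct and follows essentially the same route as the paper's proof: the outer isomorphisms come from cohomologous cocycles and from $Z_\omega=Z_\sigma$, traces are shown to vanish on $W_p$ for $p\notin Z_\omega$ by conjugating by unitaries, and the inverse of restriction is obtained by composing with the map that kills $W_p$ for $p\notin Z_\omega$. If anything you are more careful than the paper, which defines that map $\Upsilon$ only ``on generators'' as if it were a $*$-homomorphism (it is not, since $\Upsilon(W_p)\Upsilon(W_p^*)=0\neq\Upsilon(W_pW_p^*)$ for $p\notin Z_\omega$) and does not verify that $\psi\circ\Upsilon$ is tracial or that $\omega|_{Z_\omega}$ is a coboundary; your averaging construction of the conditional expectation $E$ over $\widehat{F/Z_\omega}$, the traciality check via $\omega\omega^*(pq,q)=1$, and the explicit coboundary $b$ supply exactly the justifications the paper leaves implicit.
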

\begin{proof}
The first and third isomorphisms are clear. So we prove  the second isomorphism. We first
claim that for every  $\psi\in \Tr(C^*(F,\omega))$, we have
\[
\psi(W_p)=0 \text{ for all } p\notin Z_{\omega}.
\]
To see this, fix $p\notin Z_{\omega}$. There exists at least one generator $q_i\in F$
such that $(\omega\omega^*)(p,q_i)\neq 1$. Since $\psi$ is a trace and $\omega$ is a
bicharacter, we have
\begin{align*}
\psi(W_p)=\psi(W_{q_i}^*W_pW_{q_i})&=\omega(p,q_i)\omega({q_i}^{-1},pq_i)\psi(W_p)\\
&=\omega(p,q_i)\omega(q_i^{-1},p)\omega(q_i^{-1},q_i)\psi(W_p)\\
&=\omega(p,q_i)\overline{\omega(q_i,p)}\omega(q_i^{-1},q_i)\psi(W_p)\\
&=(\omega\omega^*)(q_i,p)\omega(q_i^{-1},q_i)\psi(W_p).
\end{align*}
The formula \eqref{omega-formula} for $\omega$ says that  $\omega(q_i^{-1},q_i)=1$. Since
$(\omega\omega^*)(q_i,p)\neq 1$, the above computation shows that  $\psi(W_p)=0$.

Next define $\Upsilon:C^*(F,\omega)\rightarrow C^*(Z_{\omega})$ on generators by
\begin{align*}
\Upsilon(W_p)=\begin{cases}
W_p&\text{if $p\in Z_{\omega}$}\\
	0&\text{if $p\notin Z_{\omega}$.}\\
	\end{cases}
\end{align*}
This induces a map $\Phi:\Tr( C^*(Z_{\omega}))\rightarrow \Tr(C^*(F,\omega))$ by
$\Phi(\psi)= \psi\circ\Upsilon$. The map $\Phi$ is clearly a continuius and affine map. The
embedding $\iota:C^*(Z_{\omega})\rightarrow C^*(F,\omega)$ induces a map
$\tilde{\iota}:\Tr( C^*(F,\omega))\rightarrow \Tr(C^*(Z_{\omega}))$ with
$\tilde{\iota}(\psi)= \psi\circ\iota$. A quick computation shows that $\tilde{\iota}$ and
$\Phi$ are inverses  of each other and therefore $\Phi$ is an isomorphism.
\end{proof}

\subsection{KMS states}
Let $\tau$ be an action of $\R$ by the automorphisms of a  $C^*$-algebra $A$. We say an
element $a\in A$ is \emph{analytic} if the map $t\mapsto \alpha_t(a)$  is the restriction
of an analytic function $z\mapsto\alpha_z(a)$ on $\C$. Following \cite{BR,aHLRS,N}, for
$\beta\in \R$, we say that a state $\psi$ of $A$ is a \emph{KMS$_\beta$} state (or KMS
state at inverse temperature $\beta$)  if $\psi(ab)=\psi(b\alpha_{i\beta}(a))$ for all
analytic elements $a,b$. It suffices  to check this condition (the KMS condition)  on a
set of analytic elements that span a dense subalgebra of $A$.   By \cite[Propositions~5.3.3]{BR}, all
KMS$_\beta$ states for $\beta \not= 0$ are $\tau$-invariant in the sense that
$\psi(\tau_t(a))=\psi(a)$ for all $t\in \R$ and $a\in A$.

\section{KMS states on the $C^*$-algebras of Fell bundles}\label{sec:KMS-Fell}

In \cite[Theorem~1.1 and Theorem~1.3]{N}, Neshveyev  described the KMS states of
$C^*$-algebras of locally compact second countable \'{e}tale groupoids.  Here, we
generalise his results to  the $C^*$-algebras of Fell bundles over  groupoids. Our proof
follows Neshveyev's closely.

Let $\mu$ be a probability measure on $\GU$. A \textit{$\mu$-measurable field of states}
is a collection $\{\psi_x\}_{x\in \GU}$ of states $\psi_x$ on $\CGx$ such that for every
$f\in \PCG$ the function $x\mapsto \sum_{u\in \Gx}\psi_x(f(u)\cdot\delta):\GU\rightarrow
\C$ is $\mu$-measurable. Given a $\mu$-measurable field $\Psi:=\{\psi_x\}_{x\in \GU}$ of states
we define
\begin{align*}
[\Psi]_\mu=\big\{\varphi: \varphi \text{ is a } \mu\text{-measurable field of states and } \varphi_x=\psi_x \text{ for  }\mu\text{-a.e. }x\in \GU\big\}.
\end{align*}
Given a state $\psi$  on a $C^*$-algebra $A$,  the \textit{centraliser}  of $\psi$ is the
set of all elements $a\in A$ such that
\[\psi(ab)=\psi(ba) \text { for all } b\in A.\]

\begin{thm}\label{thm1}
Let $p:\BB\rightarrow \GG$ be a Fell bundle with singly generated fibres over a locally
compact second countable \'{e}tale groupoid $\GG$. Let $\mu$ be a probability measure on
$\GU$ and let $\Psi := \{\psi_x\}_{x\in \GU}$ be a $\mu$-measurable field of tracial states. There
is a state $\Theta(\mu, \Psi)$ of $\CG$ with centraliser containing $\Gzero$ such that,
for $f \in \PCG$, we have
\begin{equation}\label{Nesh-formula}
\Theta(\mu,\Psi)(f)= \int_{\GG^{(0)}}\psi_x\big(f|_{\Gx}\big)\,d\mu(x)=\int_{\GG^{(0)}}\sum_{u\in \Gx}\psi_x\big(f(u)\cdot\delta_u\big)\, d\mu(x).
\end{equation}
We have $\Theta(\mu,\Psi) = \Theta(\nu, \Phi)$ if and only if $\mu = \nu$ and $[\Psi]_\mu
= [\Phi]_\mu$.
\end{thm}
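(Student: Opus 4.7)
The strategy is to exhibit $\Theta(\mu,\Psi)$ as the vector state of a direct-integral representation of $\CG$, which delivers positivity, $I$-norm boundedness, and extension to all of $\CG$ in one stroke. Well-definedness comes first: since $\GG$ is \'etale, each $\Gx$ is discrete, so for $f\in\PCG$ the intersection $\Gx\cap\supp f$ is finite and $f|_{\Gx}$ defines a compactly supported section in $\PCGx\subseteq\CGx$. Covering $\supp f$ by finitely many open bisections $V_1,\ldots,V_n$ and using a partition of unity, one writes $\psi_x(f|_{\Gx})$ as a finite sum of functions of the form $\one_{W_i}(x)\psi_x(f(u_i(x))\cdot\delta_{u_i(x)})$ where $W_i:=\{x:T_{V_i}(x)=x\}$ and $u_i(x)$ is the unique element of $V_i\cap\Gx$; by hypothesis on $\Psi$ each summand is $\mu$-measurable.

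For each $x\in\GU$ let $(H_x,\pi_x,\xi_x)$ be the GNS triple of $(\CGx,\psi_x)$. Following the framework of \cite{MW}, the space $\Gamma_c(\GG_x;\BB)$ admits a right $\CGx$-action by convolution and a $\CGx$-valued inner product
\[
\langle\eta,\eta'\rangle_{\CGx}(u):=\sum_{\gamma\in\GG_x}\eta(\gamma)^*\eta'(\gamma u)\qquad(u\in\Gx),
\]
whose Hausdorff completion is a right Hilbert $\CGx$-module $\mathcal E_x$ on which left convolution extends to an $I$-norm bounded $*$-homomorphism $L_x:\CG\to\mathcal L(\mathcal E_x)$. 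Form the internal tensor product $\widetilde H_x:=\mathcal E_x\otimes_{\CGx}H_x$ with the associated representation $\widetilde\pi_x$ of $\CG$, and set $\Omega_x:=\one_x\cdot\delta_x\otimes\xi_x$. Direct computation using the identity $f\cdot(\one_x\cdot\delta_x)=f|_{\GG_x}$ and the formula above shows that $\|\Omega_x\|^2=\psi_x(\one_x\cdot\delta_x)=1$ and that $\langle\widetilde\pi_x(f)\Omega_x,\Omega_x\rangle=\psi_x(f|_{\Gx})$ for every $f\in\PCG$. Assembling $\{\widetilde H_x\}$ into a measurable Hilbert bundle, the direct-integral representation $\widetilde\pi:=\int^{\oplus}\widetilde\pi_x\,d\mu(x)$ on $\int^{\oplus}\widetilde H_x\,d\mu(x)$ has distinguished unit vector $\Omega:=\int^{\oplus}\Omega_x\,d\mu$ whose vector state equals $\Theta(\mu,\Psi)$ on $\PCG$ by Fubini; hence $\Theta(\mu,\Psi)$ extends to a state of $\CG$.

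For the centraliser, if $f\in\Gzero$ and $g\in\PCG$ then $f$ is supported on $\GU$, so the convolution formula gives $(f*g)|_{\Gx}=f(x)\cdot g|_{\Gx}$ and $(g*f)|_{\Gx}=g|_{\Gx}\cdot f(x)$ in $\CGx$; traciality of each $\psi_x$ forces $\Theta(f*g)=\Theta(g*f)$, and density of the compactly supported sections in $\Gzero$ together with continuity of $\Theta$ extends this to all $f\in\Gzero$. For uniqueness, testing against sections of the form $x\mapsto g(x)\one_x$ with $g\in C_c(\GU)$ gives $\int g\,d\mu=\int g\,d\nu$, and hence $\mu=\nu$. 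To match the fields, fix a countable basis $\{V_n\}$ of open bisections of $\GG$ and, for each $n$, a countable dense set of compactly supported continuous sections of $\BB|_{V_n}$ (available by separability of $\BB$ and second countability of $\GG$). Each such section $h$ supported in $V_n$ satisfies $h|_{\Gx}=h(u_x)\cdot\delta_{u_x}$ on $W_n:=\{x\in r(V_n):T_{V_n}(x)=x\}$ and vanishes off $W_n$, so equating $\Theta(\mu,\Psi)(\tilde g\,h)=\Theta(\mu,\Phi)(\tilde g\,h)$ for varying $\tilde g\in C_c(W_n)$ forces $\psi_x(h(u_x)\cdot\delta_{u_x})=\phi_x(h(u_x)\cdot\delta_{u_x})$ for $\mu$-a.e.\ $x\in W_n$. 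Intersecting the conull sets produced this way over the countable collection of test sections yields $\psi_x=\phi_x$ $\mu$-a.e., i.e.\ $[\Psi]_\mu=[\Phi]_\mu$.

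The principal technical obstacle is the construction of the Hilbert module $\mathcal E_x$: one must verify that the $\CGx$-valued form above is positive semidefinite and that left convolution by $\PCG$ extends to bounded adjointable operators on its completion. This is a Fell-bundle analogue of the left-regular representation of an \'etale groupoid, and I expect it to follow the lines of \cite[Proposition~4.10]{MW}, using axiom (F5) to rewrite $\langle\eta,\eta\rangle_{\CGx}$ in the form $\eta^**\eta$ restricted to $\Gx$ and the standard $I$-norm estimate to control left convolution.
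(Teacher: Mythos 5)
Your construction is essentially the paper's own proof: your $\mathcal{E}_x$ is exactly the paper's module $Y(x)$ (the completion of $\Gamma_c(\GG_x;\BB)$ under $\langle f,g\rangle=f^**g$), your $\widetilde\pi_x$ with cyclic vector $\one_x\cdot\delta_x\otimes\xi_x$ is the induced representation $Y(x)\text{-}\Ind(\pi_x)$ applied to the vector $h_x\otimes\zeta_x$, and your injectivity argument via sections supported on $\GU$ plus a countable family of bisection-supported test sections is the paper's Lemma~\ref{injectivity}. The one step you flag as an obstacle --- positivity of the $\CGx$-valued inner product and adjointability of left convolution --- is disposed of in the paper exactly as you anticipate, by citing \cite[Lemma~2.16 and Proposition~2.66]{tfb}.
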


We start with injectivity of the map induced by $\Theta$.

\begin{lemma}\label{injectivity}
Let $p:\BB\rightarrow \GG$ be a Fell bundle with singly generated fibres over a locally
compact second countable \'{e}tale groupoid $\GG$. If $\mu$ is a probability measure on
$\GU$ and $\Psi := \{\psi_x\}_{x \in \GU}$ and $\Psi' := \{\psi'_x\}_{x \in \GU}$ are
$\mu$-measurable fields of tracial states such that $\psi_x = \psi'_x$ for $\mu$-almost every 
$x$, then the functions $\Theta(\mu,\Psi)$ and $\Theta(\mu,\Psi')$ given
by~\eqref{Nesh-formula} agree. If $\psi$ is a state of $\CG$ with centraliser containing
$\Gzero$, then there is at most one pair $\big(\mu,[\Psi]_{\mu}\big)$ consisting of a
probability measure $\mu$ on $\GU$ and a $\mu$-equivalence class $[\Psi]_\mu$ of
$\mu$-measurable fields of tracial states on $\CGx$ such that that $\Theta(\mu,\Psi) =
\psi$.
\end{lemma}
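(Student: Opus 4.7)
For the first assertion, let $f \in \PCG$ be arbitrary. Since $\psi_x = \psi'_x$ for $\mu$-almost every $x$, the integrands $x \mapsto \sum_{u \in \Gx} \psi_x(f(u) \cdot \delta_u)$ and its $\Psi'$-analogue agree $\mu$-a.e., so their integrals against $\mu$ coincide; thus $\Theta(\mu, \Psi)$ and $\Theta(\mu, \Psi')$ agree on the dense subalgebra $\PCG$, which is all that formula~\eqref{Nesh-formula} asserts.

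For the uniqueness assertion, suppose $\Theta(\mu, \Psi) = \Theta(\nu, \Phi) = \psi$, and write $\tilde{\psi}_x(a) := \psi_x(a \cdot \delta_x)$ for $a \in A(x)$, with $\tilde{\phi}_x$ defined analogously. Since $\GG$ is \'etale, $\GU$ is open in $\GG$, so every $f \in \Gamma_c(\GU; \BB)$ extends by zero to an element of $\PCG$; for such $f$ the only nonzero term in the isotropy sum is the one with $u = x$, giving
\[
\int_{\GU} \tilde{\psi}_x(f(x)) \, d\mu(x) = \int_{\GU} \tilde{\phi}_x(f(x)) \, d\nu(x).
\]
Since $\Gzero$ is a $C_0(\GU)$-algebra with unital fibres $A(x)$, the standard disintegration of states on such an algebra then yields $\mu = \nu$ and $\tilde{\psi}_x = \tilde{\phi}_x$ for $\mu$-a.e.\ $x$, which pins down each $\psi_x$ on the canonical copy of $A(x)$ inside $\CGx$.

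To extend the equality to all of $\CGx$, the plan is to plug sections supported on bisections into~\eqref{Nesh-formula}. For a bisection $U \subseteq \GG$ and $f \in \PCG$ with $\supp(f) \subseteq U$, a $u \in \Gx$ contributes only when it is the unique element $u_x \in U \cap r^{-1}(x)$ and additionally satisfies $s(u_x) = x$; thus \eqref{Nesh-formula} reduces to an integral of $\psi_x(f(u_x) \cdot \delta_{u_x})$ over the set $W_U := \{x \in r(U) : T_U(x) = x\}$. Scaling $f$ by $h \circ r$ with $h \in C_c(\GU)$ produces a section still supported in $U$ and pulls the scalar factor $h(x)$ inside the integral; as $h$ varies over $C_c(\GU)$, this forces $\psi_x(f(u_x) \cdot \delta_{u_x}) = \phi_x(f(u_x) \cdot \delta_{u_x})$ for $\mu$-a.e.\ $x \in W_U$.

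Using second countability of $\GG$ (whose bisections form a basis for its topology by the \'etale property) and separability of $\BB$, I would then fix a countable basis $\{U_n\}$ of bisections of $\GG$ and, invoking the enough-sections hypothesis, a countable family $\{f_m\} \subseteq \PCG$ whose values $\{f_m(u) : m \in \N\}$ are dense in $B(u)$ for every $u \in \GG$. A countable union of the resulting $\mu$-null sets then produces a single null set $N$ off which $\psi_x$ and $\phi_x$ agree on every $f_m(u) \cdot \delta_u$ with $u \in \Gx \cap U_n$ and $\supp(f_m) \subseteq U_n$. For $x \notin N$ and $u \in \Gx$, choosing a basic bisection $U_n \ni u$ and using density of $\{f_m(u)\}_m$ in $B(u)$ yields $\psi_x(a \cdot \delta_u) = \phi_x(a \cdot \delta_u)$ for every $a \in B(u)$; since $\CGx = \clsp\{a \cdot \delta_u : u \in \Gx,\ a \in B(u)\}$, we conclude that $\psi_x = \phi_x$ for $\mu$-a.e.\ $x$. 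The main obstacle I foresee is this coordination step: producing a $\mu$-null exceptional set that is genuinely uniform in both the isotropy element $u$ and the Fell-bundle element $a$, together with the check that scaling by $h \circ r$ really extracts the weight $h(x)$ (rather than, say, $h \circ T_U$) from the integral over $W_U$.
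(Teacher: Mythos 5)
Your proposal is correct, and its overall strategy matches the paper's: evaluate $\Theta$ on sections supported on the unit space to pin down the measure, then use multiplication by functions in $C_0(\GU)$ together with a countable family of sections that is fibrewise dense to identify the fields $\mu$-almost everywhere. The execution of the second step differs in a way worth noting. The paper fixes a single countable family $\F$ of sections of the isotropy bundle with $\clsp\{f(\gamma):f\in\F\}=\BB(\gamma)$, compares the \emph{full isotropy sums} $\psi_x\big(f|_{\Gx}\big)$ and $\psi'_x\big(f|_{\Gx}\big)$, and derives a contradiction by partitioning the bad set according to the modulus and argument of the difference. You instead localise to a countable basis of bisections $U$, observe that for a section supported in $U$ the isotropy sum collapses to the single term at $u_x=U^x$ on the fixed-point set $\{x\in r(U): T_U(x)=x\}$, and then identify $\psi_x(a\cdot\delta_u)$ and $\psi'_x(a\cdot\delta_u)$ term by term. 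Your version is, if anything, the more careful one: the paper's inference from ``$\psi_x\neq\psi'_x$ on a positive-measure set'' to ``some $f\in\F$ has $\psi_x(f|_{\Gx})\neq\psi'_x(f|_{\Gx})$ on a positive-measure set'' implicitly requires that the restrictions $f|_{\Gx}$ be total in $\CGx$, which is exactly what localising to bisections supplies. The two obstacles you flag both dissolve: for $u\in\Gx$ one has $r(u)=s(u)=x$, so scaling by $h\circ r$ contributes precisely $h(x)$ (and $h(T_U(x))=h(x)$ on the fixed-point set anyway); and the coordination of null sets is a countable union over pairs (basic bisection, member of the dense family of sections supported in it), the existence of such countable families being exactly the separability/enough-sections assertion the paper itself invokes without further detail. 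Your appeal to ``standard disintegration of states on a $C_0(\GU)$-algebra'' for $\mu=\nu$ is heavier than needed; taking $f(x)=h(x)\one_x$ and using $\psi_x(\one_x\cdot\delta_x)=1$ gives $\int h\,d\mu=\int h\,d\nu$ directly, which is the paper's route via the Riesz Representation Theorem.
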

\begin{proof}
The first statement is immediate from the definition of $\mu$-equivalence.

Now fix a state $\psi$ of $\CG$ with centraliser containing $\Gzero$. Suppose that
$\mu,\mu'$ are probability measures on $\GU$ and that $\Psi = \{\psi_x\}_{x\in \GU}$ and $\Psi' =
\{\psi'_x\}_{x\in \GU}$ are $\mu$-measurable fields of states satisfying $\Theta(\mu,\Psi) = \psi =
\Theta(\mu', \Psi')$. For each $f\in C_0(\GU)$, there is a section $\tilde{f}\in
\Gamma_c(\GG,\BB)\subseteq C^*(\GG,\BB)$ such that
\begin{align*}
\tilde{f}(\gamma)=
\begin{cases}
f(x)\one_x&\text{if } \gamma=x\in \GU\\
	0&\text{if } \gamma\notin \GU.\\
\end{cases}
\end{align*}
So~\eqref{Nesh-formula}, shows that
\[
\int_{\GG^{(0)}}\psi_x(\tilde{f}(x))\,d\mu(x)
    = \psi(\tilde{f})
    = \int_{\GG^{(0)}}\psi'_x(\tilde{f}(x))\,d\mu'(x).
\]
Since each $\tilde{f}(x) = f(x)\one_x$, and since each $\psi_x$ and each $\psi'_x$ is a
tracial state, we have $\psi_x(\tilde{f}(x)) = f(x) = \psi'_x(\tilde{f}(x))$ for all $x$,
and so $\int_{\GG^{(0)}} f \,d\mu = \psi(\tilde{f}) = \int_{\GG^{(0)}} f \,d\mu'$. So the
Riesz Representation Theorem shows that $\mu = \mu'$.

To see that $\psi$ and $\psi'$ agree $\mu$-almost everywhere, we suppose to the contrary
that $\psi_x\neq \psi'_x$ for some set $V\subseteq \GU$ with $\mu(V)\neq 0$ and derive a
contradiction. Since $\BB$ has enough sections, there is  a countable family $\F\subseteq
\Gamma_c\big(\GG\cup\cap\Iso(\GG);\BB\big)$ such that for each $\gamma\in
\GG\cup\cap\Iso(\GG)$, we have $\clsp\{f(\gamma):f\in \F\}=\BB(\gamma)$. So there is at
least one $f \in \F$ and $V' \subseteq V$ of nonzero measure, so that
\[
\psi\big(f|_{\Gx}\big)=\sum_{u \in G^x_x} \psi_x(f(u)\cdot\delta_u)
    \not= \sum_{u \in G^x_x} \psi_x(f(u)\cdot\delta_u)=\psi\big(f|_{\Gx}\big)\quad \text{for all }x\in V'.
\]
For each $l\in \N,$ let $V'_l:=\big\{x\in
V':\big|\psi_x\big(f|_{\Gx}\big)-\psi'_x\big(f|_{\Gx}\big)\big|> \frac{1}{l}\big\}$. So
there is  $l\in \N$ such that  $\mu( V'_l)>0$. Now for $0\leq j\leq 3$, let
\[V'_{l,j}:=\Big\{x\in V'_l: \Arg\big(\psi_x\big(f|_{\Gx}\big)-\psi'_x\big(f|_{\Gx}\big)\big)\in\Big[j\frac{\pi}{4},(j+1) \frac{\pi}{4}\Big]\Big\}.\]
 Therefore there is $j$ such that $\mu(V'_{l,j})>0$. Then
\[\Re \bigg(e^{-(\frac{\pi}{4}, i\frac{\pi}{2})}\int_{V_{l,j}} \Big(\psi_x\big(f|_{\Gx}\big)-\psi'_x\big(f|_{\Gx}\big)\Big)\,d\mu(x)\bigg)\geq \mu(V'_{l,j})\frac{1}{l\sqrt{2}}>0,\]
which is a contradiction.
\end{proof}

\begin{proof}[Proof of Theorem\ref{thm1}]
Fix a state $\psi$ of $\CG$ whose centraliser contains $\Gzero$. Let $(H,L,\xi)$ be the
corresponding GNS-triple. Applying the Disintegration Theorem (see \cite[Theorem
~4.13]{MW}) gives a strict representation $(\lambda,\GG^{(0)}*\HH,\hat{\pi})$ of $\BB$
such that $L$ is the integrated form of  $\pi$ on $L^2(\GG^{(0)}*\HH,\lambda)$. By
\cite[Lemma~5.22]{MW}, there is a unitary isomorphism from $H$ onto
$L^2(\GG^{(0)}*\HH,\lambda)$. We identify $H$ with $L^2(\GG^{(0)}*\HH,\lambda)$ and view
$\xi$ as a section of the bundle $\GG^{(0)}*\HH$. Let $\mu$ be the measure on $\GU$ given
by $d\mu(x):=\|\xi(x)\|^2d\lambda(x)$. For each $x\in \GU$, define
$\psi_x:\CGx\rightarrow \C$ by
\begin{equation}\label{smaller-trace}
\psi_x(a\cdot\delta_u)=\|\xi(x)\|^{-2}\big(\pi(a)\xi(x),\xi(x)\big)
\end{equation}
where $u\in \Gx$ and $a\in B(u)$. We first show that $\psi_x$ is a state on $\CGx$:

Fix $u\in \Gx$ and $a\in B(u)$. A computation using the multiplication and the involution
formulas  \eqref{multi-formula} and \eqref{invo-formula} shows that for $v\in \Gx$ and
$b\in B(u)$ we have
\begin{equation}\label{multi-algebra-isotropy}
(a\cdot\delta_u)*(b\cdot\delta_v)=ab\cdot \delta_{uv} \text{ and  } (a\cdot \delta_u)^*=a^*\cdot \delta_{u^{-1}}.
\end{equation}
Therefore  using (S1) and (S2) at the final line we see that
\begin{align*}
\psi_x\big((a\cdot\delta_u)*(a\cdot\delta_u)^*\big)&=\psi_x\big(aa^*\cdot\delta_{uu^{-1}}\big)\\
&=\|\xi(x)\|^{-2}\big(\pi(aa^*)\xi(x)\big|\xi(x)\big)\\
&=\|\xi(x)\|^{-2}\big(\pi(a^*)\xi(x)\big|\pi(a^*)\xi(x)\big)\\
&\geq 0
\end{align*}
Since $\hat{\pi}$ is a $*$-functor, (S1)--(S3) imply that $\pi(\one_x)=1_{B(\HH(x))}$.
Now the computation
\[\psi_x(\one_x\cdot\delta_x)=\|\xi(x)\|^{-2}\big(\pi(\one_x)\xi(x)\big|\xi(x)\big)=1\]
implies that $\psi_x$ is a state on $\CGx$.

We  claim that the pair $\big(\mu,\{\psi_x\}_{x\in\GU}\big)$ satisfies the equation
\eqref{Nesh-formula}.  By \eqref{integratedform} for all $f\in\PCG$ we have
\begin{equation}\label{integ-form-psi}
\psi(f)=\big(L(f)\xi\,\big|\,\xi\big)=\int_{\GU}\sum_{u\in \GG^x}\big(\pi(f(u))\xi(s(u)) \,\big| \,\xi(x)\big)\Delta_\lambda(u)^{-\frac{1}{2}}\,d\lambda(x).
\end{equation}
To prove \eqref{Nesh-formula}, it suffices to show that for $\lambda$-almost every $x\in \GU$ we
have
\[\sum_{u\in \GG^x\setminus\GG^x_x}\big(\pi(f(u))\xi(s(u)) \,\big| \,\xi(x)\big)\Delta_\lambda(u)^{-\frac{1}{2}}=0.\]
Equivalently we can show that  for $\lambda$-almost every $x\in \GU$, for each  bisection
$U\subseteq \GG\setminus \cup_{x\in \GU}\GG^x_x$ such that  $u\in \GG^x\cap U$ and that
$a\in B(u)$, we have
\begin{equation}\label{equivalent-formula}
\big(\pi(a)\xi(s(u)) \,\big| \,\xi(x)\big)=0.
\end{equation}

Fix a bisection $U\subseteq \GG\setminus \cup_{x\in \GU}\Gx$ and $g\in \Gamma_c(U;\BB)$
with $\supp g\subseteq U$. Since $r(\supp g)$ is a closed subset of the open set $r(U)$,
there is a positive function $p\in C_0(r(U))\subseteq C_0(\GU)$ such that $p\equiv 1$ on
$r(\supp g)$ and zero otherwise. Fix an approximate identity $(e_\kappa)$ for $\Gzero$
and set $h_\kappa:=p e_\kappa$ for all $\kappa$. Since each $h_\kappa$ is in the
centraliser of $\psi$ and $h_\kappa* g\rightarrow g$, we have
\[\psi(g)=\lim_{\kappa}\psi(g*h_\kappa)=\lim_{\kappa}\psi(h_\kappa*g)=0.\]
Define $q:\GU\rightarrow \C$ by
\[q(x)=\sum_{u\in \GG^x}\overline{\big(\pi(g(u))\xi(s(u)) \,\big| \,\xi(x)\big)\Delta_\lambda(u)^{-\frac{1}{2}}}.\]
Clearly $q(x)\in \C$. Since $\psi(g)=0$,  $\psi(q(x)g)=0$ for all $x\in \GU$. Applying
\eqref{integ-form-psi} for $\psi$ together with (S1) for the $*$-functor $\hat{\pi}$ give
us
\begin{align}\label{formula-thm1-1}
\notag0&=\psi(q(x)g)=\int_{\GU}q(x)\sum_{u\in \GG^x}\big(\pi(g(u))\xi(s(u)) \,\big| \,\xi(x)\big)\Delta_\lambda(u)^{-\frac{1}{2}}\,d\lambda(x)\\
&=\int_{\GU}\Big|\sum_{u\in \GG^x}\big(\pi(g(u))\xi(s(u)) \,\big| \,\xi(x)\big)\Delta_\lambda(u)^{-\frac{1}{2}}\Big|^2\,d\lambda(x).
\end{align}
Thus $\sum_{u\in \GG^x}\big(\pi(g(u))\xi(s(u)) \,\big|
\,\xi(x)\big)\Delta_\lambda(u)^{-\frac{1}{2}}=0$ for $\lambda$-almost every $x\in \GU$.

Since $\BB$ has enough sections, we can fix a  countable set $\{g_n\}$ of elements
$\Gamma_c(U;\BB)$ such that for each $u\in U$, the set $\{g_n(u):n\in \N\}$ is a dense
subset of $B(u)$. Notice that for any $x\in r(U)$ the set $U\cap \GG^x$ is a singleton.
Let $U\cap \GG^x:=u^x$. For each $n\in \N$, let
\[X_n:=\Big\{x\in U:\sum_{u\in \GG^x\cap U}\big(\pi(g_n(u))\xi(s(u)) \,\big| \,\xi(x)\big)\neq 0\Big\} \,\text {  and  }\, X=\bigcup_{n\in \N} X_n.\]
Equation \eqref{formula-thm1-1} implies that $\lambda(X)=0$. For $x\in U\setminus X$ and
$n\in \N$,
\[\big(\pi(g_n(u^x))\xi(s(u^x)) \,\big| \,\xi(x)\big)=\sum_{u\in \GG^x\cap U}\big(\pi(g_n(u))\xi(s(u)) \,\big| \,\xi(x)\big)=0\]
by definition of $x$.   By choice of $g_n$, the set $\{g_n(u^x):n\in \N\}$ is a dense
subset of $B(u^x)$. It follows that $\big(\pi(a)\xi(s(u^x)) \,\big| \,\xi(x)\big)=0$ for
all $a\in B(u^x)$, giving \eqref{equivalent-formula}. So $\psi$ is given by
\eqref{Nesh-formula}.

To see that each $\psi_x$ is a trace, note that since $\Gzero$ is contained in the
centraliser of $\psi_x$, the formula \eqref{Nesh-formula} implies that
\begin{align*}
\int_{\GU}\sum_{u\in \GG^x_x}&\big(\pi\big((f*g)(u)\big)\xi(x) \,\big|\, \xi(x)\big)\Delta_\lambda(u)^{-\frac{1}{2}}\,d\lambda(x)\\
&=\int_{\GU}\sum_{u\in \GG^x_x}\big(\pi\big((g*f)(u)\big)\xi(x))\, \big|\, \xi(x)\big)\Delta_\lambda(u)^{-\frac{1}{2}}\,d\lambda(x)
\end{align*}
for all $f,g\in \Gzero$. Therefore   for $\lambda$-almost every $x\in \GU$, we have
\begin{equation}\label{vir12}
\sum_{u\in \GG^x_x}\big(\pi\big((f*g)(u)\big)\xi(x))\, \big|\, \xi(x)\big)
=\sum_{u\in \GG^x_x}\big(\pi\big((g*f)(u)\big)\xi(x))\, \big|\, \xi(x)\big).
\end{equation}
Fix $a,b\in B(x)$ and $u,v\in \Gx$ so that $a\cdot \delta_u,\, b\cdot \delta_v$ are
typical spanning elements of $\CGx$. Choose $f,g\in \Gzero$ such that  $f(x)=a$ and
$g(x)=b$. Then sums in both sides of \eqref{vir12} collapse and we get
\[\big(\pi(ab)\xi(x) \big| \xi(x)\big)
=\big(\pi(ba)\xi(x) \big| \xi(x)\big) \text { for } \lambda\text{-a.e. } x\in \GU.\]
Since $(a\cdot \delta_u) * (b\cdot \delta_v)=ab\cdot \delta_{uv},$ the formula
\eqref{smaller-trace} for $\psi_x$ implies that
\[\psi_x\big((a\cdot \delta_u) * (b\cdot \delta_v)\big)=\psi_x\big((b\cdot \delta_v) * (a\cdot \delta_u)\big).\]
Thus $\psi_x$ is a trace on $\CGx$. We have now proved that  every state of
$C^*(\GG,\BB)$ is given by a quasi-invariant measure $\mu$ and some $\mu$-measurable field $\{\psi_x\}_{x\in
\GU}$. By
 Lemma~\ref{injectivity},  we see that  each state of $C^*(\GG,\BB)$ is given by \eqref{Nesh-formula} for some $\big(\mu,[\Psi]_\mu\big)$.

We must show that every $\big(\mu,[\Psi]_\mu\big)$ gives a state. It suffices to prove
this for a  probability measure $\mu$ on $\GU$  and a representative $\{\psi_x\}_{x\in \GU}\in
[\Psi]_\mu$. For each $x\in \GU$, define $\varphi_x:\PCG\rightarrow \C$ by
\[\varphi_x(f)=\sum_{u\in \Gx}\psi_x\big(f(u)\cdot\delta_u\big) \text{ for } f\in \PCG.\]
Since $\psi_x$ is a state $\varphi_x$ extends to $\CG$. Now if we prove that each
$\varphi_x$ is a well-defined state on $\CG$, then since $x\mapsto
\varphi_x(b):\GU\rightarrow \C$ is $\mu$-measurable for all $b\in \CG$, we can define a
functional $\psi$ on $\CG$
 by $\psi(b)=\int_{\GU}\varphi_x(b)\,d\mu(x)$. Since $\mu$ is a probability measure on $\GU$, $\psi$ is a state on $\CG$ which satisfies \eqref{Nesh-formula}.  So we fix  $x\in \GU$ and show that $\varphi_x$ is a well-defined state on $\CG$:

Let $(H_x,\pi_x,\zeta_x)$ be the GNS-triple corresponding to $\psi_x$. Let $Y(x)$ be the
closure of $\Gamma_c(\GG_x;\BB)$ under the $\CGx$-valued pre-inner product
\[\langle f,g\rangle=f^**g.\]
Then $Y(x)$ is a right Hilbert $\CGx$-module with the right action determined by the
multiplication, see \cite[Lemma~2.16]{tfb}. Also $\CG$ acts as adjointable operator on
$Y(x)$ by multiplication. By \cite[Proposition~2.66]{tfb} there is a representation
$Y(x)$-$\Ind(\pi_x):\CG\rightarrow \LL\big(Y(x)\otimes_{\CGx}H_x\big)$ such that
\[Y(x)\text{-}\Ind(\pi_x)(f)(g\otimes k)=f*g\otimes k.\]

For convenience, we write $\theta_x:=Y(x)$-$\Ind(\pi_x)$. Take $h_x\in \PCGx$ such that
$\supp h_x \subseteq \{x\}$ and $h_x(x)=\one_x$. We take $f\in \PCG$ and compute
$\theta_x(f)(h_x\otimes \zeta_x)$:

\begin{align}\label{computation-of-theta}
\big(\theta_x(f)(h_x\otimes \zeta_x) \,\big|\,(h_x\otimes \zeta_x)\big)\notag&=\big(f*h_x\otimes \zeta_x \,\big|\,h_x\otimes \zeta_x\big)\\
\notag&=\big(\pi_x(\langle h_x,f*h_x\rangle) \zeta_x \,\big|\,\zeta_x\big)\\
&=\psi_x\big(\langle h_x,f*h_x\rangle\big).
\end{align}
For each $u\in \Gx$, we have
\[\langle h_x,f*h_x\rangle(u)=(h_x^**f*h_x)(u)=\sum_{\alpha\beta\gamma=u}h_x(\alpha^{-1})^*f(\beta)h_x(\gamma).\]
Each summand  vanishes unless $\alpha^{-1}=\gamma=x$ and $\beta=u$. Therefore
\[\langle h_x,f*h_x\rangle(u)=\one_x^*f(u)\one_x=f(u),\]
and hence $\langle h_x,f*h_x\rangle=f|_{\Gx}$. Putting this in
\eqref{computation-of-theta}, we get
\[\big(\theta_x(f)(h_x\otimes \zeta_x) \,\big|\,(h_x\otimes \zeta_x)\big)=\psi_x\big(f|_{\Gx}\big)=\sum_{u\in \Gx}\psi_x\big(f(u)\cdot \delta_u\big)=\varphi(f).\]
Also since $\langle h_x,h_x\rangle=\one_x\cdot\delta_x$,
\[\|h_x\otimes\zeta_x\|=\big(h_x\otimes\zeta_x\big|h_x\otimes\zeta_x\big)=\big( \pi_x\langle h_x,h_x\rangle\zeta_x\big|\zeta_x\big)=\psi_x\big(\langle h_x,h_x\rangle\big)=\psi_x(\one_x\cdot\delta_x)=1.\]
Now  since $f\mapsto \big(\theta_x(f)(h_x\otimes \zeta_x) \,\big|\,(h_x\otimes
\zeta_x)\big)$ is a state,  $\varphi$ is a state as well. Thus there is surjection
between the simplex of the states on $\CG$ with centraliser containing $\Gzero$ and the
pairs $\big(\mu,[\psi]_{\mu}\big)$.  Lemma~\ref{injectivity} gives the
injectivity and we have now completed the proof.
\end{proof}

\begin{definition}
Given a pair $(\mu, C)$ consisting of a probability measure $\mu$ on $\GU$ and a
$\mu$-equivalence class $C$ of $\mu$-measurable fields of tracial states, we write
$\tilde{\Theta}(\mu, C)$ for the state $\Theta(\mu, \Psi)$ for any representative $\Psi$
of $C$.
\end{definition}

\begin{thm}\label{thm2}
Let $p:\BB\rightarrow \GG$ be a Fell bundle with singly generated fibres over a locally
compact second countable \'{e}tale groupoid $\GG$.  Suppose that $\gamma\mapsto
\one_\gamma:\GG\rightarrow \BB$ is continuous. Let $D$ be a continuous $\R$-valued
$1$-cocycle on $\GG$ and let $\tau$ be the dynamic on $\CG$ given by
$\tau_t(f)(\gamma)=e^{itD(\gamma)}f(\gamma)$. Let $\beta\in \R$. Then
$\widetilde{\Theta}$ restricts to  a bijection between the simplex of  KMS$_\beta$ states
of $(\CG,\tau)$  and the pairs $\big(\mu,[\Psi]_{\mu}\big)$ such that
\begin{itemize}
\item [(I)] $\mu$ is a quasi-invariant measure with Radon--Nykodym cocycle $e^{-\beta
    D}$, and
\item [(II)] for $\mu$-almost every $x\in \GU$, we have
\[
\psi_{s(\eta)}(a\cdot \delta_u)=\psi_{r(\eta)}\big((\one_\eta a \one_\eta^*)\cdot \delta_{\eta u\eta^{-1}}\big) \quad\text{for   }u\in \Gx, a\in B(u)\text{  and  }\eta\in \GG_x.
\]
\end{itemize}
\end{thm}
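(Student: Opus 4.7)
The plan is to reduce the theorem to Theorem~\ref{thm1}, by first showing that every KMS$_\beta$ state has $\Gzero$ in its centralizer, and then identifying which pairs $(\mu,[\Psi]_\mu)$ in Theorem~\ref{thm1} correspond to KMS states. Since $D$ is an $\R$-valued 1-cocycle and $D(x)=D(x\cdot x)=2D(x)$ for $x\in\GU$, we have $D|_{\GU}=0$, and hence $\tau$ fixes $\Gzero$ pointwise. Thus for $h\in\Gzero$ the KMS condition gives $\psi(hf)=\psi(f\alpha_{i\beta}(h))=\psi(fh)$, placing $\Gzero$ in the centralizer. Moreover, each $f\in\PCG$ is analytic for $\tau$, since $f$ is compactly supported and $\alpha_z(f)(\gamma)=e^{izD(\gamma)}f(\gamma)$ extends to an entire $\PCG$-valued function of $z$, so it suffices to check the KMS condition on $\PCG$. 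Applying Theorem~\ref{thm1}, every KMS$_\beta$ state has the form $\widetilde\Theta(\mu,[\Psi]_\mu)$; it remains to show this pair satisfies (I) and (II) if and only if $\widetilde\Theta(\mu,[\Psi]_\mu)$ is KMS$_\beta$.

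For the only-if direction, I would derive (I) by testing the KMS condition on $f$ and $f^*$ for $f\in\Gamma_c(U;\BB)$ of the form $f(U^y)=h(y)\one_{U^y}$, where $U$ is an open bisection and $h\in C_c(r(U))$. Since $f*f^*$ and $f^**\alpha_{i\beta}(f)$ both lie in $\Gzero$, formula \eqref{Nesh-formula} reduces the KMS identity to
\[
\int_{r(U)} |h(y)|^2\, d\mu(y) = \int_{s(U)} |h(T_U^{-1}(x))|^2\, e^{-\beta D(U_x)}\, d\mu(x),
\]
and letting $h$ range, Riesz representation yields $\Delta_\mu=e^{-\beta D}$. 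For (II), fix $\eta\in\GG$ and $u\in\GG^{s(\eta)}_{s(\eta)}$, pick bisections $V\ni\eta$, $W\ni u$, and apply the KMS condition to $A=f$, $B=g*f^*$ where $f\in\Gamma_c(V;\BB)$ and $g\in\Gamma_c(W;\BB)$ are normalised so that $f(V_x)=\one_{V_x}$ and $g(u)=a$. A direct computation shows that the isotropy part of $f*g*f^*$ at $\eta u\eta^{-1}$ is $\one_\eta a\one_\eta^*$, while $(g*f^**\alpha_{i\beta}(f))(u)=e^{-\beta D(V_x)}a$ at $x=s(\eta)$. Using (I) to rewrite the range-side integral as a source-side integral introduces a factor $e^{-\beta D(V_x)}$ that cancels the dynamical twist, giving
\[
\int_{s(V)}\psi_{r(V_x)}\bigl(\one_{V_x}a\one_{V_x}^*\cdot\delta_{V_x u V_x^{-1}}\bigr)\,d\mu(x)=\int_{s(V)}\psi_x(a\cdot\delta_u)\,d\mu(x).
\]
A countable-dense-family argument of the type used in Lemma~\ref{injectivity}, invoking the ``enough sections'' hypothesis ensured by continuity of $\gamma\mapsto\one_\gamma$, then upgrades this to the pointwise identity of (II) for $\mu$-a.e.\ $x$.

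For the converse, given $(\mu,[\Psi]_\mu)$ satisfying (I) and (II), I would verify the KMS condition directly on $\PCG$ by expanding $\widetilde\Theta(\mu,[\Psi]_\mu)(f*g)$ and $\widetilde\Theta(\mu,[\Psi]_\mu)(g*\alpha_{i\beta}(f))$ via \eqref{Nesh-formula}. The inner sum in the first expression runs over decompositions $\alpha\beta=u\in\Gx$; the substitution $(\alpha,\beta)\mapsto(\beta,\alpha)$ together with the reparametrisation $x\mapsto s(\alpha)$ of the basepoint converts the first expression into the second, provided (II) rewrites $\psi_x(f(\alpha)g(\beta)\cdot\delta_{\alpha\beta})$ in terms of $\psi_{s(\alpha)}(g(\beta)f(\alpha)\cdot\delta_{\beta\alpha})$ and (I) supplies the Radon--Nikodym factor $e^{-\beta D(\beta)}$ needed to implement the $\alpha_{i\beta}$-twist. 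The \emph{main obstacle} is the only-if direction of (II): extracting a pointwise identity from an integral over a bisection, and promoting it to hold simultaneously for all $\eta\in\GG_x$, $u\in\Gx$, and $a\in B(u)$ requires careful bisection-shrinking together with the countable dense-set argument from Lemma~\ref{injectivity}, and this is where ``enough sections'' is essential.
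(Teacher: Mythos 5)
Your proposal is correct and takes essentially the same route as the paper's own proof: reduce to Theorem~\ref{thm1} via the observation that $D|_{\GU}=0$ puts $\Gzero$ in the centraliser, extract (I) and (II) by testing the KMS identity on sections of the form $\gamma\mapsto q(s(\gamma))\one_\gamma$ supported on bisections (the paper conjugates as $(\tilde a*h^*)*h$ rather than your $f*g*f^*$, but the resulting integral identities are the same, and the paper too must pass from an integral over a shrinking bisection to a pointwise a.e.\ statement at the step you flag), and verify the converse by the quasi-invariance substitution $x=T_U(y)$ followed by conjugation with $\eta=U^y$. The one elision in your sketch is that rewriting $\psi_x\big(f(\alpha)g(\beta)\cdot\delta_{\alpha\beta}\big)$ as $\psi_{s(\alpha)}\big(g(\beta)f(\alpha)\cdot\delta_{\beta\alpha}\big)$ requires the traciality of the fields $\psi_x$ in tandem with condition (II), not (II) alone --- exactly as in the paper's computation.
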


\begin{remark}
In principal, the condition in Theorem~\ref{thm2}(II) depends on the particular
representative $\Psi = \{\psi_x\}_{x\in \GU}$ of the $\mu$-equivalence class $[\Psi]_\mu$. But if
$\Psi = \{\psi_x\}_{x\in \GU}$ and $\Psi' = \{\psi'_x\}$ represent the same equivalence class, then
$\psi_x = \psi'_x$ for $\mu$-almost every $x$, and so $\Psi$ satisfies~(II) if and only if
$\Psi'$ does.
\end{remark}

Before starting the proof, we establish some notation. Let $U$ be a bisection. For each
$x\in \GU$,  we write $U^x:=r^{-1}(x)\cap U$ and $U_x:=s^{-1}(x)\cap U$. The maps
$x\mapsto U^x:r(U)\rightarrow U$ and $x\mapsto U_x:s(U)\rightarrow U$ are homeomorphisms
and we can view them as the inverse of $r,s$ respectively. We also write
$T_U:r(U)\rightarrow s(U)$ for the homeomorphism given by $T_U(x)=s(U^x)$.
\begin{proof}
Suppose that $\psi$ is a KMS$_\beta$ state on $(\CG,\tau)$. Since $D|_{\GU}=0$, the KMS
condition implies that $\Gzero$ is contained in the centraliser of $\psi$. By
Theorem~\ref{thm1} there is a pair $\big(\mu,[\Psi]_\mu\big)$ consisting of a
probability measure $\mu$ on $\GU$ and a $\mu$-equivalence class $[\Psi]_\mu$ of
$\mu$-measurable fields of tracial states on $\CGx$ that satisfies \eqref{Nesh-formula}.
Fix a representative $\{\psi_x\}_{x\in \GU}\in[\Psi]_\mu$. We claim that
$\mu$ and $\{\psi_x\}_{x\in \GU}$ satisfy (I)-(II).

First note that for a bisection $U$, $f\in \Gamma_c(U;\BB)$ and $g\in \PCG$, the
multiplication formula in $\PCG$ implies that
\begin{align*}
f*g(\gamma)=\sum_{\alpha\beta=\gamma}f(\alpha)g(\beta)=
\begin{cases}
f(U^x)g((U^x)^{-1}\gamma)&\text{if } x=r(\gamma)\in r(U)\\
	0&\text{if } r(\gamma)\notin r(U).\\
	\end{cases}
	\end{align*}
Similarly
 \begin{align*}
g*\tau_{i\beta}(f)(\gamma)&=\begin{cases}
e^{-\beta D(U_x)}g(\gamma(U_x)^{-1})f(U_x)&\text{if }x=s(\gamma)\in s(U)\\
	0&\text{if }s(\gamma)\notin s(U).\\
	\end{cases}
	\end{align*}
	Since $\psi$ is a KMS$_\beta$ state, we have $\psi(f*g)=\psi(g*\tau_{i\beta}(f))$.
Now applying formula \eqref{Nesh-formula} for $\psi$ gives us
	\begin{align}\label{formula-thm2-1}
\int_{r(U)} \sum_{u\in \Gx}\psi_x\notag&\big(f(U^x)g((U^x)^{-1}u)\cdot\delta_u\big)\, d\mu(x)\\
&=\int_{s(U)} e^{-\beta D(U_x)}\sum_{u\in \Gx}\psi_x\big(g(u(U_x)^{-1})f(U_x)\cdot\delta_u\big)\, d\mu(x).
	\end{align}
		
		To see (I), fix a bisection $U$ and let $q\in C_c(s(U))$. Since $\BB$ has enough
sections, we can define $h:U\rightarrow \BB$ by $h(\gamma)=q(s(\gamma))\one_\gamma$.
Since $\gamma\mapsto \one_\gamma$ is continuous,  $h$ descends to a continuous section
$\tilde{h}$ on $\GG$. Now we apply \eqref{formula-thm2-1} with $f:=\tilde{h}$ and
$g:=\tilde{h}^*$. The sums in both sides collapse to the single term $u=x$. Since
$U^x=U_{T_U(x)}$, we have
\[\int_{r(U)} \psi_x\Big(\big(|q(T_U(x))|^2\one_x\one_x^*\big)\cdot\delta_x\Big)\, d\mu(x)\\
=\int_{s(U)} e^{-\beta D(U_x)}\psi_x\big((|q(x)|^2\one_x\one_x^*)\cdot\delta_x\big)\, d\mu(x).\]
 Note that $(\lambda a)\cdot \delta=\lambda(a\cdot \delta)$ for all $\lambda\in \C$ and $\one_x\one_x^*=1_{A(x)}=\one_x$. Since  $\one_x\cdot \delta_x=1_{\CGx}$ and  $\psi_x$ is a state on $\CGx$, we have
\[\int_{r(U)} \big|q(T_U(x))\big|^2\, d\mu(x)\\
=\int_{s(U)} e^{-\beta D(U_x)}|q(x)|^2\, d\mu(x).\]
Thus $\mu$ is a quasi-invariant measure with Radon--Nykodym cocycle $e^{-\beta D}$.

For (II), let $x\in \GU$,  $u\in \Gx, a\in B(u)$ and   $\eta\in \GG_x$. Let $\tilde{a}\in
\Gamma_c(\GG_x;\BB)$ such that $ \tilde{a}$ is supported in a bisection $U$ and
$\tilde{a}(u)=a$. Since $U$ is a bisection, it follows that $\tilde{a}(v)=0$ for all
$v\in \GG_x\setminus \{u\}$. Fix a bisection $V$ containing $\eta$ such that
$s(V)\subseteq s(U)$. Fix $q\in C_c(\GU)$ such that $q\equiv1$ on a neighborhood of $x$
and $\supp (q)\subseteq s(V)$. Define $h\in \PCG$ by
\begin{align*}
h(\gamma)=\begin{cases}
q(s(\gamma))\one_\gamma&\text{if $\gamma\in V$}\\
	0&\text{otherwise.}\\
	\end{cases}
\end{align*}
Since $\psi$ is a KMS$_\beta$ state, we have
\begin{equation} \label{kmsforii}
\psi(( \tilde{a}*h^*)*h)=\psi(h*\tau_{i\beta}( \tilde{a}*h^*)).
\end{equation}

We compute both sides of \eqref{kmsforii}.  For the left-hand side, we first apply the
formula \eqref{Nesh-formula} for $\psi$ to get
\begin{equation} \label{simplifying-left-kmsforii}
\psi(( \tilde{a}*h^*)*h) = \int_{\GU} \sum_{v\in \GG^y_y}\psi_y\big(( \tilde{a}*h^**h)(v)\cdot\delta_v\big)\,d\mu(y).
\end{equation}
Since $h$ is supported on the bisection $V$, $h^**h$ is supported on $s(V)$ and we have
\[( \tilde{a}*h^**h)(v)=\sum_{\alpha\beta=v} \tilde{a}(\alpha)(h^**h)(\beta)= \tilde{a}(v)(h^**h)(s(v)).\]
 Since $\tilde{a}$ is supported in $U$,
\begin{align*}
\sum_{v\in \GG^y_y}\psi_y\big(( \tilde{a}*h^**h)(v)\cdot\delta_v\big)&=\sum_{v\in \GG^y_y\cap U}\psi_y\big( \big(\tilde{a}(v)(h^**h)(s(v))\big)\cdot\delta_v\big)\\
 &=\psi_y\big( \big(\tilde{a}(U_y)(h^**h)(y)\big)\cdot\delta_{U_y}\big).
\end{align*}
Putting this in \eqref{simplifying-left-kmsforii} and applying the definition of  $h$, we
get
\begin{align} \label{kmsforiil}
\psi(( \tilde{a}*h^*)*h)\notag&=\int_{s(V)} \psi_y\big(\big( \tilde{a}(U_y)(h^**h)(y)\big)\cdot\delta_{U_y}\big)\,d\mu(y)\\
&=\int_{s(V)}|q(y)|^2\psi_y\big( \tilde{a}(U_y)\cdot\delta_{U_y}\big)\,d\mu(y).
\end{align}

For the right-hand side, we start by applying the formula \eqref{Nesh-formula} for $\psi$:
\begin{align*}
\psi\big(h*\tau_{i\beta}( \tilde{a}*h^*)\big)\notag&=\int_{\GU} \sum_{w\in \GG^z_z}\psi_z\big((h*\tau_{i\beta}( \tilde{a}*h^*))(w)\cdot\delta_w\big)\,d\mu(z).
\end{align*}
Two applications of the multiplication formula in $\PCG$ give
\begin{align*}
\psi&\big(h*\tau_{i\beta}( \tilde{a}*h^*)\big)=\int_{r(V)} \sum_{w\in \GG^z_z} \psi_z\Big(\big(h(V^z)\tau_{i\beta}( \tilde{a}*h^*)((V^z)^{-1}w)\big)\cdot\delta_w\Big)\,d\mu(z)\\
\!&=\!\int_{r(V)} \!\!\! e^{-\beta D\big(U_{T_V(z)}(V^z)^{-1}\big)}\psi_z\Big(\big(h(V^z) \tilde{a}\big(U_{T_V(z)}\big)h(V^z)^*\big)\cdot\delta_{V^zU_{T_V(z)}(V^z)^{-1}}\Big)\,d\mu(z)\\
\!&=\!\int_{r(V)} \!\!\!e^{-\beta D\big( U_{T_V(z)}(V^z)^{-1}\big)}\big|q(T_V(z))\big|^2\psi_z\Big(\big(\one_{V^z} \tilde{a}
(U_{T_V(z)})\one_{V^z}^*\big)\cdot\delta_{V^zU_{T_V(z)}(V^z)^{-1}}\Big)\,d\mu(z).
\end{align*}
Since for each $z\in r(V)$, we have $V^z=V_{T_V(z)}$ and $z=r\big(V_{T_V(z)}\big)$, the
variable substitution  $y=T_V(z)$ gives
\begin{equation} \label{kmsforiir}
\psi(h*\tau_{i\beta}( \tilde{a}*h^*)) = \int_{s(V)} |q(y)|^2\psi_{r(V_y)}\big((\one_{V_y} \tilde{a}(U_{y})\one_{V_y}^*)\cdot\delta_{V_yU_{y}(V_y)^{-1}}\big)\,d\mu(y).
\end{equation}
Putting $y=x$ in \eqref{kmsforiir}, we have $U_y=u$ and $V_y=\eta$. Since $|q(x)|^2=1$,
part (II) follows from \eqref{kmsforiil} and \eqref{kmsforiir}.

For the other direction, suppose that  $\big(\mu,[\psi]_{\mu}\big)$ satisfies (I)-(II).
The formula \eqref{Nesh-formula} in Theorem~\ref{thm1} gives a state $\psi:=\Theta(\mu,\Psi)$ on $\CG$. We
aim to show that $\psi$ is a KMS$_\beta$ state. It suffices to show that for each
bisection $U$, each $f\in \Gamma_c(U;\BB)$, and each   $g\in \PCG$ we have
\begin{equation} \label{kms-check}
\psi(f*g)=\psi(g*\tau_{i\beta}(f))
\end{equation}
 Fix a representative $\{\psi_x\}_{x\in \GU}\in[\Psi]_\mu$. The  left-hand side of \eqref{kms-check} is
\begin{equation}\label{lhand-kms-check}
\psi(f*g)=\int_{r(U)}\sum_{u\in \Gx}\psi_x\big(\big(f(U^x)g((U^x)^{-1}u)\big)\cdot\delta_u\big)\,d\mu(x).
\end{equation}
We compute the right-hand in terms of the representative $\{\psi_x\}_{x\in
\GU}\in[\Psi]_\mu$. We start by the multiplication formula in $\PCG$ and the formula
\eqref{Nesh-formula} for $\psi$:
\begin{align*}
\psi(g*\tau_{i\beta}(f))&=\int_{x\in \GU}\sum_{u\in \Gx}\psi_x\big((g*\tau_{i\beta}(f))(u)\cdot\delta_u\big)\,d\mu(x)\\
&=\int_{x\in s(U)}\sum_{u\in \Gx}e^{-\beta D(U_x)}\psi_x\big(\big(g(u(U_x)^{-1})f(U_x)\big)\cdot\delta_u\big)\,d\mu(x).
\end{align*}
Since $\mu$ is quasi-invariant with Radon--Nykodym cocycle $e^{-\beta D}$,  the
substitution $x=T_U(y)$  gives
\[\psi(g*\tau_{i\beta}(f))=\int_{r(U)}\sum_{u\in \GG^{T_U(y)}_{T_U(y)}}\psi_{{T_U(y)}}\big(\big(g(u(U_{{T_U(y)}})^{-1})f(U_{{T_U(y)}})\big)\cdot\delta_u\big)\,d\mu(y).\]
Since $U_{{T_U(y)}}=U^y$, we obtain
\[\psi(g*\tau_{i\beta}(f))=\int_{r(U)}\sum_{u\in \GG^{T_U(y)}_{T_U(y)}}\psi_{{T_U(y)}}\big(\big(g(u(U^y)^{-1})f(U^y)\big)\cdot\delta_u\big)\,d\mu(y).\]
Applying the identity $\GG^{T_U(y)}_{T_U(y)}(U^y)^{-1}=(U^y)^{-1}\GG^y_y$, we can rewrite
the sum as
\begin{equation} \label{rhand-kms-check}
\psi(g*\tau_{i\beta}(f))=\int_{r(U)}\sum_{v\in \GG^{y}_{y}}\psi_{{T_U(y)}}\Big(\big(g((U^y)^{-1}v)f(U^y)\big)\cdot\delta_{(U^y)^{-1}vU^y}\Big)\,d\mu(y).
\end{equation}

  To simplify further,  fix $v\in \GG^{y}_{y}$. Using that $\one_{U^y}\one_{U^y}^*=\one_y$ then applying \eqref{multi-algebra-isotropy} give
\begin{align*}
\psi_{{T_U(y)}}&\Big(\big(g((U^y)^{-1}v)f(U^y)\big)\cdot\delta_{(U^y)^{-1}vU^y}\Big)\\
&=\psi_{{T_U(y)}}\Big(\big(g((U^y)^{-1}v)\one_{U^y}\one_{U^y}^*f(U^y)\big)\cdot\delta_{(U^y)^{-1}vU^y(U^y)^{-1}U^y}\Big)\\
&=\psi_{{T_U(y)}}\bigg(\Big(\big(g((U^y)^{-1}v)\one_{U^y}\big)\cdot\delta_{(U^y)^{-1}vU^y}\Big)\Big(\big(\one_{U^y}^*f(U^y)\big)\cdot\delta_{(U^y)^{-1}U^y}\Big)\bigg).
\end{align*}
Since $g((U^y)^{-1}v)\one_{U^y}\in B((U^y)^{-1}vU^y)$ and $(U^y)^{-1}vU^y\in
\GG^{T_U(y)}_{T_U(y)}$, the trace property of $\psi_{T_U(y)}$ implies that
\begin{align*}
\psi_{{T_U(y)}}&\Big(\big(g((U^y)^{-1}v)f(U^y)\big)\cdot\delta_{(U^y)^{-1}vU^y}\Big)\\
&=\psi_{{T_U(y)}}\bigg(\Big(\big(\one_{U^y}^*f(U^y)\big)\cdot\delta_{(U^y)^{-1}U^y}\Big)\Big(\big(g((U^y)^{-1}v)\one_{U^y}\big)\cdot\delta_{(U^y)^{-1}vU^y}\Big) \bigg)\\
&=\psi_{{T_U(y)}}\Big(\big(\one_{U^y}^*f(U^y)g((U^y)^{-1}v)\one_{U^y}\big)\cdot\delta_{(U^y)^{-1}vU^y}\Big) \quad \text {by \eqref{multi-algebra-isotropy}}.
\end{align*}
We apply (II) with $\eta=U^y$. Recall that $T_U(y)=s(U^y)$ and so $r(\eta)=y$ and we have
\begin{align*}
\psi_{T_U(y)}\Big(\big(g((U^y)^{-1}v)&f(U^y)\big)\cdot\delta_{(U^y)^{-1}vU^y}\Big)\\
&=\psi_{y}\Big(\big(\one_{U^y}\one_{U^y}^*f(U^y)g((U^y)^{-1}v)\one_{U^y}\one_{U^y}^*\big)\cdot\delta_{v}\Big)\\
&=\psi_{y}\big(\big(f(U^y)g((U^y)^{-1}v)\big)\cdot\delta_v\big).
\end{align*}
Substituting this in each term of \eqref{rhand-kms-check} gives
\begin{align*}
\psi(g*\tau_{i\beta}(f))=\int_{r(U)}\sum_{v\in \GG^{y}_{y}}\psi_{y}\big(\big(f(U^y)g((U^y)^{-1}v)\big)\cdot\delta_v\big)\,d\mu(y).
\end{align*}
which is precisely  \eqref{lhand-kms-check}.  So \eqref{kms-check} holds,  and  $\psi$ is
a KMS$_\beta$ state for $\tau$.
\end{proof}

\begin{lemma}
With the hypotheses of Theorem~\ref{thm2}, suppose that  $\psi$ is a KMS$_\beta$ state on
$(\CG,\tau)$ and that  $\big(\mu, C\big)$ is the associated pair given by
Theorem~\ref{thm2}. For any $\mu$-measurable field of states $\Psi = \{\psi_x\}_{x\in \GU}$ such that
$[\Psi]_\mu = C$, we have
\[
    \psi_{x}(a\cdot \delta_u)=0 \text{ for } \mu\text{-a.e. }x\in \GU \text{, }u\in \Gx\setminus D^{-1}(0), \text{ and }a\in B(u).
\]
\end{lemma}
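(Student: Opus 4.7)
The plan is to exploit the KMS$_\beta$ condition against elements of $\Gzero$ that are fixed by the dynamics, producing an algebraic identity in which the factor $1 - e^{-\beta D(U^x)}$ separates out and forces the vanishing of $\psi_x$ on the relevant spanning elements. I will fix a bisection $U$ of $\GG$ with $U \cap D^{-1}(0) = \emptyset$, a section $g \in \Gamma_c(U;\BB)$, and a function $q \in C_c(r(U))$. Since $\gamma \mapsto \one_\gamma$ is continuous (a hypothesis of Theorem~\ref{thm2}), the formula $h_q(x) := q(x)\one_x$ defines an element $h_q \in \Gamma_c(\GU;\BB) \subseteq \Gzero$, and because $D$ vanishes on $\GU$ we have $\tau_{i\beta}(h_q) = h_q$. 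The KMS$_\beta$ condition applied to the pair $(g, h_q)$ then yields $\psi(h_q * g) = \psi(h_q * \tau_{i\beta}(g))$, which on applying~\eqref{Nesh-formula} to both sides, using $(h_q * g)(\gamma) = q(r(\gamma))\one_{r(\gamma)} g(\gamma)$, $\tau_{i\beta}(g)(\gamma) = e^{-\beta D(\gamma)} g(\gamma)$, and the fact that for $x \in r(U)$ the sum $\sum_{u\in\Gx}$ contributes at most the single term $u = U^x$, collapses to
\[
\int_{\GU} q(x)\,\bigl(1 - e^{-\beta D(U^x)}\bigr)\,\psi_x\bigl(g(U^x)\cdot\delta_{U^x}\bigr)\,d\mu(x) = 0,
\]
with the integrand understood to vanish off $\{x \in r(U) : U^x \in \Gx\}$.

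Since this vanishes for every $q \in C_c(r(U))$, the integrand without $q$ must be zero $\mu$-almost everywhere. For $\beta \ne 0$ and $U$ disjoint from $D^{-1}(0)$ the factor $1 - e^{-\beta D(U^x)}$ is nonzero throughout $r(U)$, so $\psi_x(g(U^x)\cdot\delta_{U^x}) = 0$ for $\mu$-a.e.\ $x$ with $U^x \in \Gx$. To upgrade this to the lemma, I will use second countability: cover the open set $\GG \setminus D^{-1}(0)$ by countably many bisections $\{U_n\}_{n\in\N}$ each disjoint from $D^{-1}(0)$; since $\BB$ has enough sections, choose for each $n$ a countable family $\{g_{n,k}\}_{k\in\N} \subseteq \Gamma_c(U_n;\BB)$ such that $\{g_{n,k}(\gamma) : k\in\N\}$ is dense in $B(\gamma)$ for every $\gamma \in U_n$. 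The union of the resulting $\mu$-null exceptional sets is a single null set $N$; for $x \notin N$ and any $u \in \Gx \setminus D^{-1}(0)$, I pick $n$ with $u \in U_n$, observe that $U_n^x = u$, and obtain $\psi_x(g_{n,k}(u)\cdot\delta_u) = 0$ for every $k$. Norm-continuity of $a \mapsto \psi_x(a\cdot\delta_u)$ together with density of $\{g_{n,k}(u)\}_k$ in $B(u)$ then finishes the proof.

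The main obstacle is the simplification of the KMS identity in the first step --- correctly reducing each side of $\psi(h_q * g) = \psi(h_q * \tau_{i\beta}(g))$ to a single surviving term, and verifying that the prefactor $q(x)\one_x$ passes through $\psi_x$ as the scalar $q(x)$, which uses that $\one_x = 1_{A(x)}$ and that $\psi_x$ is unital. For $\beta = 0$ the argument degenerates, consistent with the fact that a KMS$_0$ state is merely a trace on $\CG$ with no intrinsic link to the cocycle $D$.
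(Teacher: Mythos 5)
Your argument is correct, and it reaches the conclusion by a route that is close in spirit to, but not identical with, the paper's. The paper invokes the automatic $\tau$-invariance of KMS$_\beta$ states ($\beta\neq 0$) and compares $\psi(f)$ with $\psi(\tau_1(f))$ for a section $f$ supported in a bisection avoiding $D^{-1}(0)$, so the scalar that must separate out is $1-e^{iD(u)}$; you instead play the KMS identity off against an element $h_q=q\cdot\one$ of $\Gzero$ (fixed by the dynamics), producing the factor $1-e^{-\beta D(U^x)}$. Your choice is actually the more robust one: $1-e^{-\beta D(u)}$ is automatically nonzero whenever $\beta D(u)\neq 0$, whereas $1-e^{iD(u)}$ can vanish for $D(u)\in 2\pi\Z$, so the invariance route really needs $\psi\circ\tau_t=\psi$ for a well-chosen (or varying) $t$. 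You are also more careful than the paper about the measure-theoretic bookkeeping on two points that deserve it: (i) you localise with arbitrary $q\in C_c(r(U))$ before concluding that the integrand vanishes $\mu$-a.e.\ (a single integral identity for a complex-valued integrand does not give pointwise vanishing), and (ii) you organise the exceptional null sets over a countable cover of $\GG\setminus D^{-1}(0)$ by bisections together with countable fibrewise-dense families of sections, so that the final null set is independent of $u$ and $a$; the paper's proof fixes $u$ and $a$ first and leaves this quantifier exchange implicit. Two small remarks: the identity $\psi(h_q*g)=\psi(h_q*\tau_{i\beta}(g))$ as you state it uses that $h_q$ lies in the centraliser of $\psi$ (or, equivalently, note that $h_q*g$ and $g*h_q$ have the same restriction to each $\Gx$ since $r(u)=s(u)=x$ there, so either order works in \eqref{Nesh-formula}); and the passage of $q(x)\one_x$ through $\psi_x$ rests on $\one_x$ acting as the identity on the left $A(x)$-module $B(u)$ together with linearity of $\psi_x$, not on unitality of $\psi_x$. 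Your closing observation that the argument, and indeed the statement, degenerates at $\beta=0$ is correct and worth recording, since the lemma is only meaningful for $\beta\neq 0$.
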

\begin{proof}
Fix $x\in \GU$, $u\in \Gx\setminus D^{-1}(0)$ and $a\in B(u)$. Let
$\varepsilon:=\frac{|D(u)+1|}{2}$. Since $\BB$ has enough sections, there exists  $f\in
\PCG$ such that $f(u)=a$ and that $f$ is supported in a bisection $U$ such that
$D(U)\subseteq \big(D(u)-\epsilon, D(u)+\epsilon\big)$. In particular, if $D(u)<0$, then
$D(v)<-\varepsilon$ for all $v\in U$, and if $D(u)>0$, then  $D(v)>\varepsilon$ for all
$v\in U$. Recall that $U_x$ is the unique element of $s^{-1}(x)\cap U$. Since $\psi$ is a
KMS$_\beta$ state, it is $\tau$-invariant  and we have $\psi(\tau_1(f))=\psi(f)$.
Applying the  formula \eqref{Nesh-formula} for $\psi$ gives
\[\int_{s(U)}\psi_x\big(f(U_x)\cdot\delta_{U_x}\big)\,d\mu(x)=\int_{s(U)}e^{-D(U_x)}\psi_x\big(f(U_x)\cdot\delta_{U_x}\big)\,d\mu(x).\]
Now our choice of $u$  forces $\psi_x\big(f(U_x)\cdot\delta_{U_x}\big)=0$ for $\mu$-almost every
$x\in \GU$. In particular $\psi_x(a\cdot\delta_{u})=0$  for $\mu$-almost every $x\in \GU$.
\end{proof}

By specialising to $\beta = 0$, we can use our results to describe the trace space of the
cross-section algebra of a Fell bundle with singly generated fibres. This is particularly
important given the role of the trace simplex of a simple $C^*$-algebra in Elliott's
classification program.

\begin{cor}
Let $p:\BB\rightarrow \GG$ be a Fell bundle with singly generated fibres over a locally
compact second countable \'{e}tale groupoid $\GG$.  Then $\widetilde{\Theta}$ restricts
to a bijection between the trace space of $(\CG,\tau)$  and the pairs
$\big(\mu,[\Psi]_{\mu}\big)$ consisting of a probability measure $\mu$ on $\GU$ and a
$\mu$-equivalence class $[\Psi]_\mu$ of $\mu$-measurable fields of tracial states on
$\CGx$ such that
 \begin{itemize}
\item [(I)] $\mu$ is a quasi-invariant measure with Radon--Nykodym cocycle $1$.
\item [(II)] For $\mu$-almost every $x\in \GU$, we have
\[\psi_{s(\eta)}(a\cdot \delta_u)=\psi_{r(\eta)}\big((\one_\eta a \one_\eta^*)\cdot \delta_{\eta u\eta^{-1}}\big) \quad\text{for   }u\in \Gx, a\in B(u)\text{  and  }\eta\in \GG_x.\]
\end{itemize}
\end{cor}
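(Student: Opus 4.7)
The plan is to derive this corollary as a direct specialization of Theorem~\ref{thm2} by choosing the zero cocycle and zero inverse temperature. First, I would take $D \equiv 0$, which is trivially a continuous $\R$-valued $1$-cocycle on $\GG$. The associated dynamics $\tau_t(f)(\gamma) = e^{it\cdot 0}f(\gamma) = f(\gamma)$ is then the trivial action, so every element of $\CG$ is analytic with $\tau_z = \id$ for all $z \in \C$. Taking $\beta = 0$, the KMS$_\beta$ condition $\psi(ab) = \psi(b\tau_{i\beta}(a))$ reduces to $\psi(ab) = \psi(ba)$, which is precisely the tracial condition. Hence the simplex of KMS$_0$ states for this trivial dynamics coincides with the trace space of $\CG$.

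Next, I would invoke Theorem~\ref{thm2} directly to obtain a bijection $\widetilde{\Theta}$ between KMS$_0$ states of $(\CG, \tau)$ and pairs $(\mu, [\Psi]_\mu)$ satisfying conditions (I) and (II) of that theorem. With $\beta = 0$ and $D \equiv 0$, condition (I) becomes the requirement that $\mu$ be quasi-invariant with Radon--Nykodym cocycle $e^{-\beta D} = 1$, matching condition (I) of the present corollary verbatim. Condition (II) of Theorem~\ref{thm2} is formulated purely in terms of the $\one_\eta$ and the isotropy structure and does not involve $D$ or $\beta$, so it coincides word-for-word with condition (II) of the corollary.

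Given this direct reduction, essentially no obstacle remains beyond confirming that the hypotheses of Theorem~\ref{thm2} -- in particular the continuity of $\gamma \mapsto \one_\gamma:\GG \to \BB$ needed to construct the test sections used in deriving (II) -- are inherited here as standing assumptions. Once this is granted, the bijection established in Theorem~\ref{thm2} restricts to the desired bijection between the trace space of $\CG$ and pairs $(\mu, [\Psi]_\mu)$ satisfying (I) and (II), and the proof is complete.
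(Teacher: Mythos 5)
Your proposal is correct and matches the paper's own proof, which likewise obtains the corollary by specialising Theorem~\ref{thm2} to inverse temperature $\beta=0$, where the KMS condition reduces to the trace property and the Radon--Nykodym cocycle $e^{-\beta D}$ becomes identically $1$. The only cosmetic difference is that you additionally set $D\equiv 0$, which trivialises the dynamics and thereby sidesteps the one point the paper makes explicit, namely that the proof of Theorem~\ref{thm2} never relies on the automatic $\tau$-invariance of KMS states (which is not guaranteed at $\beta=0$).
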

\begin{proof}
The  KMS condition at inverse temperature $0$ reduces to the trace property. So we just
need to observe that the proof of Theorem~\ref{thm2} does not require the automatic
$\tau$-invariance of KMS states for $\tau$.
\end{proof}

\section{KMS states  on  twisted groupoid $C^*$-algebras}\label{sec:KMS twisted
G}

To apply our results to twisted groupoid $C^*$-algebras, we recall how to regard a
twisted groupoid $C^*$-algebra as the cross-sectional algebra of a Fell-bundle with
one-dimensional fibres. This is standard; we just include it for completeness.

\begin{lemma}\label{Fell-groupoid}
Let $\GG$ be a locally compact second countable \'{e}tale groupoid, and let $\sigma\in
Z^2(\GG,\T)$. Let  $\BB:=\GG\times \C$ and equip $\BB$ with the product topology. Define
$p:\BB\rightarrow \GG$ by $p(\gamma,z)=\gamma$. Then
\begin{itemize}
\item[(I)] $p:\BB\rightarrow \GG$ is a Fell bundle with respect to the multiplication
    and involution given by
\begin{equation}\label{multiplicationFG}
(\gamma,z)(\eta,w)=(\gamma\eta,\sigma(\gamma,\eta)zw)\text{ and } (\gamma,z)^*=(\gamma^{-1},\overline{\sigma(\gamma,\gamma^{-1})}\overline{z}).
 \end{equation}
\item[(II)]For each $\gamma\in \GG$, the fibre $\BB(\gamma)$ is singly generated with
    $\one_\gamma:=(\gamma,1)$. The map $\gamma\mapsto \one_\gamma:\GG\rightarrow \BB$
    is continuous.
\item[(III)] There is an injective $*$-homomorphism $\Phi$ from $C_c(\GG,\sigma)$
    onto $\Gamma_c(\GG,B)$   such that
\[
\Phi(f)(\gamma)=(\gamma,f(\gamma)) \text { for all } f\in C_c(\GG,\sigma) \text{ and }\gamma\in \GG.
\]
This homomorphism extends to an isomorphism $\Phi:C^*(\GG,\sigma)\rightarrow
C^*(\GG,B)$.
\item[(IV)] There is  an isomorphism $\Upsilon:C^*(\GG_x^x,\sigma)\rightarrow \CGx$ such
    that
\[\Upsilon(W_u)= (u,1)\cdot \delta_{u} \text{ for all } u\in \GG^x_x. \]
\end{itemize}
\end{lemma}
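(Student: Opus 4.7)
The plan is to verify (I)--(IV) in order: (I) and (II) are mechanical checks of the Fell bundle axioms for the explicit multiplication and involution \eqref{multiplicationFG}, (III) translates convolution in $C_c(\GG,\sigma)$ into convolution of sections of $\BB$, and (IV) falls out of (III) by restriction to the isotropy group at $x$.

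For (I), one first notes that $p:\BB\to \GG$ with the product topology is a trivial continuous Banach bundle whose fibres $\{\gamma\}\times\C$ are one-dimensional. Axioms (F1) and (F2) read directly off \eqref{multiplicationFG}. For (F3) one expands both $(ab)^*$ and $b^*a^*$ using \eqref{multiplicationFG}; the claim reduces to an identity among values of $\sigma$ that is a routine consequence of the 2-cocycle equation together with the normalization $\sigma(r(\gamma),\gamma)=\sigma(\gamma,s(\gamma))=1$. Axiom (F4) holds because $\sigma(x,x)=1$ for $x\in\GU$ forces $B(x)=\{x\}\times\C$ to be a copy of $\C$. For (F5), the inner products in \eqref{f5} take values in $B(r(\gamma))$ and $B(s(\gamma))$ and realise $B(\gamma)\cong\C$ as the standard $\C$-$\C$-imprimitivity bimodule, so the conditions \eqref{imprimitivity} become another short cocycle computation. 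For (II), $\one_\gamma\one_\gamma^*=(r(\gamma),\sigma(\gamma,\gamma^{-1})\overline{\sigma(\gamma,\gamma^{-1})})=1_{A(r(\gamma))}$ and the symmetric computation handle the unit-inner-product requirements, while $(\gamma,z)=(r(\gamma),z)\cdot\one_\gamma=\one_\gamma\cdot(s(\gamma),z)$ yields $B(\gamma)=A(r(\gamma))\one_\gamma=\one_\gamma A(s(\gamma))$; continuity of $\gamma\mapsto(\gamma,1)$ is built into the product topology.

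For (III), a direct comparison of the defining formulas shows that $\Phi(f*g)=\Phi(f)*\Phi(g)$ and $\Phi(f^*)=\Phi(f)^*$: the factor $\sigma(\alpha,\beta)$ appearing in convolution on $C_c(\GG,\sigma)$ is exactly the cocycle factor picked up by $(\alpha,f(\alpha))(\beta,g(\beta))$ in $\BB$, and similarly for the involution. Injectivity and surjectivity onto $\Gamma_c(\GG;\BB)$ are clear, and $\|\Phi(f)(\gamma)\|=|f(\gamma)|$ shows that $\Phi$ preserves the $I$-norm. To extend $\Phi$ to the $C^*$-completions, I would check that $L\mapsto L\circ\Phi^{-1}$ is a bijection between $I$-norm decreasing representations of $\PCG$ and bounded $*$-representations of $C_c(\GG,\sigma)$, so that the universal norms coincide on $\Phi(C_c(\GG,\sigma))$. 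For (IV), restricting $\Phi$ to $C_c(\GG^x_x,\sigma)$ gives an isomorphism onto $\Gamma_c(\GG^x_x;\BB)$; since $\GG^x_x$ is discrete, the unitary generator $W_u$ corresponds to the point mass at $u$, which $\Phi$ sends to $(u,1)\cdot\delta_u$, and passing to $C^*$-completions produces $\Upsilon$.

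The main obstacle I foresee is the passage to the $C^*$-completions in (III): the universal norm on $C^*(\GG,\sigma)$ is defined over all bounded $*$-representations, while the universal norm on $\CG$ is defined only over $I$-norm decreasing representations. Reconciling the two requires the twisted analogue of the observation that every bounded $*$-representation of $C_c(\GG,\sigma)$ is automatically $I$-norm bounded; alternatively, one can invoke the Disintegration Theorem \cite[Theorem~4.13]{MW} to place both sides on a common footing via strict representations of $\BB$. Once this step is settled, the cocycle bookkeeping in (I)--(III) is purely mechanical and (IV) follows at once from (III) by restriction.
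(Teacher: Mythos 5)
Your proposal is correct and follows essentially the same route as the paper: mechanical verification of (F1)--(F5) via the cocycle identity, direct comparison of the convolution and involution formulas for (III), and restriction to the isotropy group for (IV). In fact you are slightly more careful than the paper on the one delicate point, namely reconciling the universal norm over all $*$-representations of $C_c(\GG,\sigma)$ with the universal norm over $I$-norm decreasing representations of $\PCG$ (the paper only records the inequality $\|\Phi(f)\|\leq\|f\|$), and your suggested fix via automatic $I$-norm boundedness or the Disintegration Theorem is the right one.
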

\begin{proof}
For (I), since $\C$ is a Banach space, $\BB$ is the trivial upper-semi continuous Banach
bundle. We check (F1)--(F5):  The conditions (F1) and (F2)  follow from
\eqref{multiplicationFG} easily. To see (F3), let $a:=(\gamma,z)$ and $b:=(\eta,w)$. An
easy computation using \eqref{multiplicationFG} shows that
\[(ab)^*=\big((\eta\gamma)^{-1},\overline{\sigma(\gamma\eta,\eta^{-1}\gamma^{-1})\sigma(\gamma,\eta)}\overline{zw}\big), \text{ and}\]
\[b^*a^*=\big((\eta\gamma)^{-1},\sigma(\eta^{-1},\gamma^{-1}) \overline{\sigma(\eta,\eta^{-1})\sigma(\gamma,\gamma^{-1})}\overline{zw}\big). \]
Two applications of the cocycle relation give us
\begin{align*}
\sigma(\eta^{-1},\gamma^{-1}) \sigma(\gamma\eta,\eta^{-1}\gamma^{-1})\sigma(\gamma,\eta)&=\sigma(\gamma\eta,\eta^{-1})\sigma(\gamma,\gamma^{-1})\sigma(\gamma,\eta)\\&=\sigma(\eta,\eta^{-1})\sigma(\eta,r(\eta))\sigma(\gamma,\gamma^{-1})\\
&=\sigma(\eta,\eta^{-1})\sigma(\gamma,\gamma^{-1}).
\end{align*}
Therefore  $(ab)^*=b^*a^*$. For $(F4)$, let $x\in\GU$. Since $x^{-1}=x=x^{-1}x$, the
operations \eqref{multiplicationFG} make sense in the fibre $B(x)$ and turn it into a
$*$-algebra.
 Also for $a=(x,z)\in B(x)$, we have $\|aa^*\|=|c(x^{-1},x)z\overline{z}|=|z|^2=\|a\|^2$. Thus $B(x)$ is a $C^*$-algebra. For (F5), note that each fibre $B(\gamma)$ is a full left Hilbert  $A(r(\gamma))$-module and a full right Hilbert  $A(s(\gamma))$-module. Equations \eqref{imprimitivity} and \eqref{f5} follow from \eqref{multiplicationFG}.

(II) is clear. To see (III),  note that the multiplication and involution formulas in
$C_c(\GG,\sigma)$ and $\PCG$ show that $\Phi$ is a $*$-homomorphism. Since  each section
$g\in \PCG$ has the form $g(\gamma)=(\gamma, z_{g,\gamma})$ for some $z_{g,\gamma}\in
\C$, we can define $\tilde{\Phi}:\PCG\rightarrow C_c(\GG,\sigma)$ by
$\tilde{\Phi}(g)(\gamma)=z_{g,\gamma}$. An easy computation shows that $\tilde{\Phi}$ is
the inverse of $\Phi$ and therefore $\Phi$ is a bijection. For each $I$-norm decreasing
representation $L$ of $\PCG$, the map $L\circ \Phi$ is a $*$-representation of
$C_c(\GG,\sigma)$. Therefore
\begin{align*}
\|\Phi&(f)\|_{\PCG} \\
    &=\sup \{\|L(\Phi(f))\|:L \text{ is an $I$-norm decreasing representation  of } \PCG\}\\
    &\leq \sup \{\|L'(f)\|:L' \text{ is a $*$-representation  of } C_c(\GG,\sigma)\}\\
    &=\|f\|_{ C_c(\GG,\sigma)}.
\end{align*}
Thus $\Phi$  is norm decreasing and therefore extends to an isomorphism of
$C^*$-algebras.

For (IV), take  $W_u,W_v\in \Gx$. We have
\[\Upsilon(W_uW_v)=\sigma(u,v)\Upsilon(W_{uv})=\sigma(u,v)((uv,1)\cdot \delta_{uv}).\]
To compare this with $\Upsilon(W_u)\Upsilon(W_v)$, we calculate applying
\eqref{multi-algebra-isotropy} in the second equality:
\[\Upsilon(W_u)\Upsilon(W_v)=\big((u,1)\cdot \delta_{u}\big)*\big((v,1)\cdot \delta_{v}\big)=(u,1)(v,1)\cdot \delta_{u,v}=\sigma(u,v)((uv,1)\cdot \delta_{uv}).\]
 Thus $\Upsilon$ is a $*$-homomorphism. The map $\tilde{\Upsilon}:\CGx\rightarrow C^*(\GG_x^x,\sigma)$ given by $\tilde{\Upsilon}((u,z)\cdot\delta_u)=zW_u$ is an inverse for $\Upsilon$, so  $\Upsilon$ descends to an isomorphism of $C^*$-algebras.
\end{proof}

In parallel with Section~\ref{sec:KMS-Fell}, we say that  a collection $\{\psi_x\}_{x\in
\GU}$ of states $\psi_x$ on  $C^*(\GG^x_x,\sigma)$ is a \textit{$\mu$-measurable field of
states} if for every $f\in C_c(\GG,\sigma)$, the function $x\mapsto\sum_{u\in
\Gx}f(u)\psi_x(W_u)$ is $\mu$-measurable.

\begin{cor}\label{kms states for groupoid}
Let $\GG$ be a locally compact second countable \'{e}tale groupoid, and let $\sigma\in
Z^2(\GG,\T)$. Let $D$ be a continuous $\R$-valued $1$-cocycle on $\GG$ and let
$\tilde{\tau}$ be the dynamics on $C^*(\GG,\sigma)$ given by
$\tilde{\tau}_t(f)(\gamma)=e^{itD(\gamma)}f(\gamma)$. Take $\beta\in \R$. There is a
bijection between the simplex of the KMS$_\beta$ states of
$\big(C^*(\GG,\sigma),\tilde{\tau}\big)$  and the pairs $\big(\mu,[\Psi]_{\mu}\big)$
consisting of a probability measure $\mu$ on $\GU$ and a $\mu$-equivalence class
$[\Psi]_\mu$ of $\mu$-measurable fields of tracial states on $C^*(\GG^x_x,\sigma)$ such
that
\begin{itemize}
\item [(I)] $\mu$ is a quasi-invariant measure with Radon--Nykodym cocycle $e^{-\beta
    D}$.
\item [(II)]For each representative $\{\psi_x\}_{x\in \GU}\in[\Psi]_\mu$ and for
    $\mu$-almost every $x\in \GU$, we have
\[\psi_{x}(W_u)=\sigma(\eta u,\eta^{-1})\sigma(\eta,u)\overline{\sigma(\eta^{-1},\eta)}\psi_{r(\eta)}\big(W_{\eta u\eta^{-1}}\big)\quad  \text{for  } u\in \GG^x_x, \text{ and }  \eta\in \GG_x.\]
\end{itemize}
The state corresponding to the pair $\big(\mu,[\Psi]_{\mu}\big)$ is given by
\begin{equation}\label{Nesh-formula-groupoid}
\psi(f)=\int_{\GG^{(0)}}\sum_{u\in \Gx}f(u)\psi_x(W_u)\, d\mu(x)\quad\text{for all }f\in C_c(\GG,\sigma).
\end{equation}
\end{cor}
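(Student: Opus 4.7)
The plan is to reduce the corollary to Theorem~\ref{thm2} by transporting everything through the isomorphism $\Phi : C^*(\GG,\sigma) \to C^*(\GG,\BB)$ of Lemma~\ref{Fell-groupoid}, where $\BB = \GG \times \C$ is the associated Fell line-bundle. First I would check that $\Phi$ intertwines $\tilde{\tau}$ with the dynamics $\tau$ of Theorem~\ref{thm2}: since $\Phi(f)(\gamma) = (\gamma, f(\gamma))$ and both dynamics act fibrewise by $z \mapsto e^{itD(\gamma)}z$, this is immediate. Similarly, the fibrewise isomorphisms $\Upsilon : C^*(\Gx,\sigma) \to \CGx$ of Lemma~\ref{Fell-groupoid}(IV) identify $\mu$-measurable fields of tracial states on the two sides via $\{\psi_x\} \leftrightarrow \{\psi_x \circ \Upsilon^{-1}\}$; measurability transfers because for $f \in C_c(\GG,\sigma)$ and $u \in \Gx$ we have $\Phi(f)(u) \cdot \delta_u = f(u)\,\Upsilon(W_u)$, so the scalar-valued functions defining the two notions of measurability agree.

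With these identifications, condition~(I) of Theorem~\ref{thm2} is literally condition~(I) of the corollary, and formula~\eqref{Nesh-formula-groupoid} drops out of \eqref{Nesh-formula} by the same identity $\Phi(f)(u) \cdot \delta_u = f(u)\,\Upsilon(W_u)$. The substantive work is the translation of condition~(II). Since each fibre $B(u)$ is one-dimensional and both sides of the equation in Theorem~\ref{thm2}(II) are $\C$-linear in $a$, it suffices to check the condition with $a = (u,1) = \Upsilon(W_u)$. A direct calculation using~\eqref{multiplicationFG} yields
\[
\one_\eta \cdot (u,1) \cdot \one_\eta^* = \bigl(\eta u \eta^{-1},\; \sigma(\eta u,\eta^{-1})\,\sigma(\eta,u)\,\overline{\sigma(\eta,\eta^{-1})}\bigr).
\]
Applying the cocycle relation to the triple $(\eta,\eta^{-1},\eta)$ together with the normalisation $\sigma(r(\eta),\eta) = \sigma(\eta,s(\eta)) = 1$ gives $\sigma(\eta,\eta^{-1}) = \sigma(\eta^{-1},\eta)$; substituting this and reading the resulting identity through $\Upsilon^{-1}$ converts Theorem~\ref{thm2}(II) into the formula stated in (II) of the corollary.

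The main obstacle is nothing conceptual, since all the hard work is already done in Theorem~\ref{thm2}; the only point requiring care is the cocycle bookkeeping in computing $\one_\eta \cdot (u,1) \cdot \one_\eta^*$ and matching $\overline{\sigma(\eta,\eta^{-1})}$ with $\overline{\sigma(\eta^{-1},\eta)}$ in the final formula, both of which follow from a single application of the cocycle identity. The stated bijection then follows by combining the bijection from Theorem~\ref{thm2} with the bijections induced by $\Phi$ on KMS states and by $\{\Upsilon\}$ on measurable fields of traces.
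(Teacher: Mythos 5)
Your proposal is correct and follows essentially the same route as the paper: transport everything through the isomorphisms $\Phi$ and $\Upsilon$ of Lemma~\ref{Fell-groupoid}, apply Theorem~\ref{thm2}, and translate condition~(II) by computing $\one_\eta(u,1)\one_\eta^*$ in the line bundle. Your explicit justification that $\sigma(\eta,\eta^{-1})=\sigma(\eta^{-1},\eta)$ via the cocycle identity on $(\eta,\eta^{-1},\eta)$ is a point the paper's computation passes over silently, so that extra care is welcome.
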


\begin{proof}
Lemma~\ref{Fell-groupoid} yields a Fell bundle $\BB$ over $\GG$, an  isomorphism
$\Phi:C^*(\GG,\sigma)\rightarrow C^*(\GG,\BB)$, and isomorphism
$\Upsilon:C^*(\GG_x^x,\sigma)\rightarrow \CGx$. The isomorphism $\Phi$ intertwines the
dynamics $\tilde{\tau}$ and $\tau$ induced by $D$ on $C^*(\GG,\sigma)$ and
$C^*(\GG,\BB)$. We aim to apply Theorem~\ref{thm2}.

Let $\psi$ be a KMS$_\beta$ state of $\big(C^*(\GG,\sigma),\tilde{\tau}\big)$. Then
$\varphi:=\psi\circ \Phi^{-1}$ is a  KMS$_\beta$ state on $\big(C^*(\GG,\BB),\tau\big)$
and Theorem~\ref{thm2}  gives a pair  $\big(\mu,\{\varphi_x\}_{x\in\GU}\big)$ of a
probability measure $\mu$ on $\GU$ and a  $\mu$-measurable fields of tracial states on
$C^*(\GG^x_x,\BB)$ satisfying (I)-(II) of Theorem~\ref{thm2}.  Let
$\psi_x:=\varphi_x\circ \Upsilon$. For each $f\in C_c(\GG,\sigma)$, the function
$x\mapsto\sum_{u\in \Gx}f(u)\psi_x(W_u)=\sum_{u\in \Gx}\varphi_x ((u,f(u))\cdot
\delta_{u})$ is $\mu$-measurable. Therefore $\{\psi_x\}_{x\in \GU}$ is a $\mu$-measurable
field of states on $C^*(\GG^x_x,\sigma)$.

To see that $\{\psi_x\}_{x\in \Lambda^\infty}$ satisfies (II), let $u\in \GG_x^x$ and
$\eta\in\GG_x$.
 A computation in $\GG\times \C$ shows that
\[
\one_\eta (u,z) \one_\eta^*=(\eta,1) (u,1) (\eta,1)^*
= \big(\eta u \eta^{-1}, \sigma(\eta u,\eta^{-1})\sigma(\eta,u)\overline{\sigma(\eta^{-1}z,\eta)}\big).
\]
Now applying part~(II) of Theorem~\ref{thm2} to $\{\varphi_x\}_{x\in \Lambda^\infty}$
with $\eta$ and $a=(u,1)$ we get
\begin{align*}
\psi_{x}(W_u)&=\varphi_{x}\big((u,1)\cdot\delta_{u}\big)\\
&=\varphi_{r(\eta)}\Big(\big(\eta u \eta^{-1},\sigma(\eta u,\eta^{-1})\sigma(\eta,u)\overline{\sigma(\eta^{-1},\eta)}\big) \cdot \delta_{\eta u \eta^{-1}}\Big)\\
&=\sigma(\eta u,\eta^{-1})\sigma(\eta,u)\overline{\sigma(\eta^{-1},\eta)}\varphi_{r(\eta)}\big((\eta u \eta^{-1},1)
 \cdot \delta_{\eta u \eta^{-1}}\big)\\
&=\sigma(\eta u,\eta^{-1})\sigma(\eta,u)\overline{\sigma(\eta^{-1},\eta)}\psi_{r(\eta)}\big(W_{\eta u \eta^{-1}}\big).
\end{align*}
To see \eqref{Nesh-formula-groupoid}, fix $f\in C_c(\GG,\sigma)$. Applying the formula
\eqref{Nesh-formula} for $\varphi$ we have
\begin{align}\label{state-groupois-bundle}
\psi(f)\notag&=\varphi(\Phi(f))=\int_{\GG^{(0)}}\sum_{u\in \Gx}\varphi_x(\Phi(f)(u)\cdot \delta_u)\, d\mu(x)\\
&=\int_{\GG^{(0)}}\sum_{u\in \Gx}f(u)\varphi_x((u,1)\cdot \delta_u)\, d\mu(x)
=\int_{\GG^{(0)}}\sum_{u\in \Gx}f(u)\psi_x(W_u)\, d\mu(x).
\end{align}
So the KMS$_\beta$ state $\psi$ yields a pair $\big(\mu,[\psi]_{\mu}\big)$ satisfying (I)
and (II), and $\psi$ in then given by \eqref{Nesh-formula-groupoid}.

For the converse, fix $\big(\mu,\{\psi_x\}_{x\in \GU}\big)$ satisfying (I) and (II).  Let
$\varphi_x=\psi_x\circ \Upsilon^{-1}$. For $g\in \PCG$ and $u\in \GU$, let $z_{g,u}\in \C$ be
the element such that $g(u)=(u, z_{g,u})$. The function $x\mapsto\sum_{u\in
\Gx}\varphi_x(g(u)\cdot \delta_u)=\sum_{u\in \Gx}z_{g,u}\psi_x(W_u)$ is $\mu$-measurable.
Therefore $\{\varphi_x\}_{x\in \GU}$ is a $\mu$-measurable field of states on
$C^*(\GG^x_x,\BB)$.  By (II) we have
\begin{align*}
\varphi_{x}((u,z)\cdot\delta_u)&=\psi_{x}\circ\Psi^{-1}\big((u,z)\cdot\delta_{u}\big)=\psi_x(zW_u)\\
&=z\sigma(\eta u,\eta^{-1})\sigma(\eta,u)\overline{\sigma(\eta^{-1},\eta)}\psi_{r(\eta)}\big(W_{\eta u\eta^{-1}}\big)\\
&=\psi_{r(\eta)}\big(z\sigma(\eta u,\eta^{-1})\sigma(\eta,u)\overline{\sigma(\eta^{-1},\eta)}W_{\eta u\eta^{-1}}\big)\\
&=\varphi_{r(\eta)}\big(\big(\eta u \eta^{-1},z\sigma(\eta u,\eta^{-1})\sigma(\eta,u)\overline{\sigma(\eta^{-1},\eta)}\big)\cdot \delta_{\eta u\eta^{-1}}\big)\\
&=\varphi_{r(\eta)}\big((\one_\eta (u,z) \one_\eta^*)\cdot \delta_{\eta u\eta^{-1}}\big).
\end{align*}
Thus $\big(\mu,\{\varphi_x\}_{x\in\GU}\big)$ is a pair as in Theorem~\ref{thm2}.
Therefore there is a KMS$_\beta$ state $\varphi:=\Theta(\mu,\Psi)$ on $C^*(\GG,\BB)$ satisfying
\eqref{Nesh-formula}. Now $\psi=\varphi\circ \Phi$ is a KMS$_\beta$ on $C^*(\GG,\sigma)$
and by \eqref{state-groupois-bundle} $\psi$ satisfies \eqref{Nesh-formula-groupoid}.
\end{proof}
\begin{remark}
Corollary~\ref{kms states for groupoid} applied to the trivial cocycle $\sigma\equiv 1$
recovers the results of Neshveyev in \cite[Theorem~1.3]{N}.
\end{remark}

\section{KMS states  on the $C^*$-algebras of twisted higher-rank graphs}\label{sec:KMS
k-graph}

\subsection{Higher-rank graphs}
Let $\Lambda$ be  a $k$-graph with vertex set $\Lambda^0$ and degree map
$d:\Lambda\rightarrow\N^k$ in the sense of  \cite{KP}. For any $n\in \N^k$, we write
$\Lambda^n:=\{\lambda\in \Lambda^*: d(\lambda)=n\}$. A $k$-graph $\Lambda$ is finite if
$\Lambda^n$ is finite for all $n\in \N^k$. Given $u,v\in \Lambda^0$,   $u\Lambda v$
denotes  $\{\lambda\in \Lambda: r(\lambda)=u \text { and } s(\lambda)=v\}$. We say
$\Lambda$ is \textit{strongly connected} if $u\Lambda v\neq\emptyset$ for every $u,v\in
\Lambda^0$. A $k$-graph $\Lambda$ has no sources if $u\Lambda^n\neq\emptyset$ for every
$u\in \Lambda^0$ and $n\in \N^k$ and it is row finite if $u\Lambda^n$ is finite for all
$u\in \Lambda^0$, and $n\in \N^k$.

A $\T$-valued 2-cocycle $c$ on $\Lambda$ is a map $c:\Lambda^2\rightarrow \T$ such that
$c(r(\lambda),\lambda)=c(\lambda,s(\lambda))=1$ for all $\lambda\in \Lambda$ and
$c(\lambda,\mu)c(\lambda\mu,\nu)=c(\mu,\nu)c(\lambda,\mu\nu)$ for all composable elements
$\lambda,\mu,\nu$. We write $Z^2(\Lambda,\T)$ for the group of all $\T$-valued 2-cocycles
on $\Lambda$.

Let  $\Omega_k:=\{(m,n)\in \N^k\times \N^k:m\leq n\}$. One can verify that that
$\Omega_k$ is a $k$-graph with  $r(m,n)=(m,m), s(m,n)=(n,n),$  $(m,n)(n,p)=(m,p)$ and
$d(m,n)=n-m$. We identify $\Omega_k^0$ with $\N^k$ by $(m,m)\mapsto m$. The set
\[\Lambda^\infty:=\{x:\Omega_k\rightarrow \Lambda: \text{ $x$ is a functor that intertwines  the degree maps}\}\]
is called the \textit{infinite-path space}  of $\Lambda$. For $l\in \N^k$, the shift map
$\rho^l:\Lambda^\infty\rightarrow \Lambda^\infty $ is given by $\rho^l(x)(m,n)=
x(m+l,n+l)$ for all $x\in \Lambda^\infty$ and $(m,n)\in \Omega_k$.

Let $\Lambda$ be a strongly connected finite $k$-graph. The set
\[\Per:=\{m-n: m,n\in \N^k, \rho^m(x)=\rho^n(x) \text{ for all } x\in \Lambda^\infty\}\subseteq \Z^k\]
is subgroup of $\Z^k$  and is called \textit{periodicity group} of $\Lambda$ (see
\cite[Proposition~5.2]{aHLRS}).

\subsection{The path  groupoid}
Suppose that  $\Lambda$ is a row finite $k$-graph with no sources. The set
\[\GL:=\{(x,l,y)\in \Lambda^\infty\times \Z^k\times\Lambda^\infty:l=m-n,m,n\in \N^k \text{ and } \sigma^m(z)=\sigma^{n}(z)\}\]
  is a groupoid with $(\GL)^{(0)}=\{(x,0,x):x\in \Lambda^\infty\}$ identified with $\Lambda^\infty$,  structure maps $r(x,l,y)=x$, $s(x,l,y)=y$, $(x,l,y)(y,l',z)=(x,l+l',z)$ and $(x,l,y)^{-1}=(y,-l,x)$. This groupoid is called \textit{infinite-path groupoid}.
	For $\lambda,\mu\in \Lambda$ with $s(\lambda)=r(\mu)$ let
\[Z(\lambda,\mu):=\{ (\lambda x,d(\lambda)-d(\mu),\mu x)\in \GL: x\in \Lambda^\infty \text{ and } r(x)=s(\lambda) \}.\]
The sets $\{Z(\lambda,\mu): \lambda,\mu\in \Lambda\}$ form  a basis for a locally compact
Hausdorff topology on $\GL$ in which it is an \'{e}tale groupoid (see
\cite[Proposition~2.8]{KP}).

Let $\Lambda\mathbin{_s*_s}\Lambda:=\{(\mu,\nu)\in \Lambda\times\Lambda:
s(\mu)=s(\nu)\}$. Let  $\PP$ be a subset of $\Lambda\mathbin{_s*_s}\Lambda$ such that
\begin{equation}\label{partition-form}
(\mu,s(\mu))\in \PP \text{ for all }\mu\in \Lambda\text{ and }\GL=\bigsqcup_{(\mu,\nu)\in \PP}Z(\mu,\nu).
\end{equation}
There is always such a $\PP$, see \cite[Lemma~6.6]{KPS1}. For each $\alpha\in \GL$, we
write $(\mu_\alpha,\nu_\alpha)$ for the element of $\PP$ such that $\alpha\in
Z(\mu_\alpha,\nu_\alpha)$.  Let $\hat{d}:\GL\rightarrow \Z^k$ be the function defined by
$\hat{d}(x,n,y)=n$. Given a  2-cocycle $c$ on $\Lambda$,   \cite[Lemma~6.3]{KPS1} says
that for every composable pair $\alpha,\beta\in \GL$ there are $\lambda,\iota,\kappa\in
\Lambda$ and $y\in \Lambda^\infty$ such that
\[\nu_\alpha\lambda=\mu_\beta\iota,\quad \mu_\alpha\lambda=\mu_{\alpha\beta}\kappa,\quad \nu_\beta\iota=\nu_{\alpha\beta}\kappa, \quad \text{ and}\]
\[
\alpha=(\mu_\alpha\lambda y,\hat{d}(\alpha),\nu_\alpha\lambda y),\quad \beta=(\mu_\beta\iota y,\hat{d}(\beta),\nu_\beta\iota y)\quad \text{and }
\alpha\beta=(\mu_{\alpha\beta}\kappa y,\hat{d}(\alpha\beta),\nu_{\alpha\beta}\kappa y).
\]
Furthermore, the formula
\[
\sigma_c(\alpha,\beta)=c(\mu_\alpha,\lambda)\overline{c(\nu_\alpha,\iota)}c(\mu_\beta,\iota)\overline{c(\nu_\beta,\iota)}\overline{c(\mu_{\alpha\beta},\kappa)}c(\nu_{\alpha\beta},\kappa).
\]
is a  continuous  2-cocycle  on $\GL$ and  does not depend on the choice of
$\lambda,\iota,\kappa$. Theorem~6.5 of \cite{KPS1} shows that continuous 2-cocycles on
$\GL$ obtained from different partitions $\PP,\PP'$ are cohomologous.

Let $\Lambda$ be a  strongly connected finite $k$-graph and take  $c\in Z^2(\Lambda,\T)$.
Let $\PP\subseteq\Lambda\mathbin{_s*_s}\Lambda$ be  as in \eqref{partition-form}. For
each $x\in \Lambda^\infty$, define $\sigma_c^x:\Per\rightarrow \T$ by
$\sigma_c^x(p,q):=\sigma_c((x,p,x),(x,q,x))$. Clearly $\sigma_c^x\in Z^2(\Per,\T)$. By
\cite[Lemma~3.3]{KPS2} the cohomology class of  $\sigma_c^x$ is independent of $x$. So by
the argument of Section~\ref{sec:bicharacters} there is a  bicharacter
$\omega_c$ on $\Per$ that is cohomologous to $\sigma_c^x$ for all $x\in \Lambda^\infty$.

\subsection{KMS states of preferred dynamics}
Given a finite $k$-graph $\Lambda$ and for $1\leq i\leq k$, let   $A_i\in M_{\Lambda^{0}}$ be the matrix  with entries $A_i(u,v):=|u\Lambda^{e_i}v|$. Writing $\rho(A_i)$ for the spectral radius of $A_i$, define $D:\GL\rightarrow \R$ by $D(x,n,y)=\sum_{i=1}^k n_i\ln \rho(A_i)$.  The function  $D$ is
locally constant and therefore it is a continuous $\R$-valued $1$-cocycle on $\GL$.
Lemma~12.1 of \cite{aHLRS} shows that there a unique probability measure $M$ on
$\Lambda^\infty$ with Radon--Nycodym cocycle $e^{D}$. This measure is a Borel measure and
satisfies
\begin{equation}\label{k-graph-measure}
M\big(x\in \Lambda^\infty:\{x\}\times \Per \times \{x\}\neq \Gx\}\big)=0.
\end{equation}
Given $\sigma\in  Z^2(\GL,\mathbb{T})$,  $D$ induces a dynamics $\tau$ on
$C^*(\GL,\sigma)$ such that $\tau_t(f)(x,m,y)=e^{it D(x,m,y)}f(x,m,y)$. Following
\cite{aHLRS} we call this dynamics the \textit{preferred dynamics}.

\begin{cor}\label{kms states for graph}
Suppose that $\Lambda$ is a strongly connected finite $k$-graph.  Let $c\in
Z^2(\Lambda,\mathbb{T})$ and let  $\PP$ be as in \eqref{partition-form}.  Suppose that
$\omega_c\in Z^2(\Per,\T)$ is a bicharacter cohomologous to
$\sigma_c^x(p,q)=\sigma_c((x,p,x),(x,q,x))$ for all $x\in \Lambda^\infty$.   Let $\tau$
be the preferred dynamics on $C^*(\GL,\sigma_c)$. Let   $M$ be the measure described at
\eqref{k-graph-measure}. There is a bijection between the simplex of  KMS$_1$ states of
$\big(C^*(\GL,\sigma_c),\tau\big)$  and
 the set of  $M$-equivalence classes $[\psi]_M$ of tracial states $\{\psi_x\}_{x\in \Lambda^\infty}$ on $C^*(\Per,\omega_c)$ such that for all $W_p\in \Per$ and $\eta:=(y,m,x)\in(\GL)_x$, we have
 \begin{equation}\label{property II of twisted}
\psi_{x}(W_p)=\sigma_c\big(\eta,(x,p,x)\big)\sigma_c\big((y,m+p,x),\eta^{-1}\big)\overline{\sigma_c(\eta^{-1},\eta)}\psi_{y}(W_p).
\end{equation}

The state corresponding to the class $[\psi]_{M}$ satisfies
\[
\psi(f)=\int_{\GG^{(0)}}\sum_{p\in \Per}f(x,p,x)\psi_x(W_p)\, dM(x)\quad\text{for all }f\in C_c(\GG,\sigma).
\]
\end{cor}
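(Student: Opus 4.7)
The plan is to apply Corollary~\ref{kms states for groupoid} to the groupoid $\GL$ with the $2$-cocycle $\sigma_c$ and the preferred dynamics $\tau$ at inverse temperature $\beta=1$, and then translate the resulting parameterisation using \cite[Lemma~12.1]{aHLRS}, the measure-theoretic information in~\eqref{k-graph-measure}, and Lemma~\ref{lemma:traces}.

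First, Corollary~\ref{kms states for groupoid} identifies the KMS$_1$ simplex of $(C^*(\GL,\sigma_c),\tau)$ with pairs $(\mu,[\Psi]_\mu)$ consisting of a quasi-invariant probability measure $\mu$ on $\Lambda^\infty$ with the appropriate Radon--Nikodym cocycle, together with a $\mu$-equivalence class of measurable fields of tracial states on $C^*((\GL)^x_x,\sigma_c)$ satisfying condition~(II) of that corollary. Since \cite[Lemma~12.1]{aHLRS} singles out $M$ as the unique such measure, condition~(I) forces $\mu = M$ and I am reduced to parameterising the tracial-state fields and translating condition~(II).

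Next I would use~\eqref{k-graph-measure} to discard an $M$-null set and identify the isotropy $(\GL)^x_x$ with $\Per$ via $(x,p,x)\mapsto p$. Under this identification the restriction of $\sigma_c$ to $(\GL)^x_x\times(\GL)^x_x$ becomes $\sigma_c^x$, and Lemma~\ref{Fell-groupoid}(IV) gives an isomorphism $C^*((\GL)^x_x,\sigma_c)\cong C^*(\Per,\sigma_c^x)$. Combining this with Lemma~\ref{lemma:traces} and the cohomology between $\sigma_c^x$ and the fixed bicharacter $\omega_c$, I would produce an affine bijection between measurable fields of tracial states on $C^*((\GL)^x_x,\sigma_c)$ and fields on the fixed algebra $C^*(\Per,\omega_c)$, and verify that this bijection preserves $M$-equivalence. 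Condition~(II) of Corollary~\ref{kms states for groupoid} is then transported to~\eqref{property II of twisted} by direct computation: taking $u = (x,p,x)$ and $\eta = (y,m,x)\in(\GL)_x$ gives $\eta u\eta^{-1} = (y,p,y)$, so the factors $\sigma(\eta u,\eta^{-1})$, $\sigma(\eta,u)$ and $\overline{\sigma(\eta^{-1},\eta)}$ appearing in Corollary~\ref{kms states for groupoid}(II) become respectively $\sigma_c((y,m+p,x),\eta^{-1})$, $\sigma_c(\eta,(x,p,x))$ and $\overline{\sigma_c(\eta^{-1},\eta)}$, which is precisely~\eqref{property II of twisted}. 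The integral formula follows from~\eqref{Nesh-formula-groupoid} since, for $M$-almost every $x$, every element of $(\GL)^x_x$ has the form $(x,p,x)$ with $p\in\Per$.

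The main obstacle is the measurable identification between tracial-state fields on $C^*((\GL)^x_x,\sigma_c)$ and on the fixed algebra $C^*(\Per,\omega_c)$. Lemma~\ref{lemma:traces} provides a pointwise bijection, but one must choose continuous coboundaries $b_x$ implementing the cohomology between $\sigma_c^x$ and $\omega_c$ depending measurably on $x$, and then check that the coboundary factors acquired when transporting traces combine with the cocycle factors in Corollary~\ref{kms states for groupoid}(II) to produce exactly~\eqref{property II of twisted}, with no residual $b_x$-terms left over. Once this bookkeeping is complete, the rest is routine unpacking of the general parameterisation.
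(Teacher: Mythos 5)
Your proposal follows essentially the same route as the paper: apply Corollary~\ref{kms states for groupoid} at $\beta=1$, use the uniqueness of $M$ from \cite[Lemma~12.1]{aHLRS} to pin down the measure, use \eqref{k-graph-measure} to identify $\Gx$ with $\{x\}\times\Per\times\{x\}$ off an $M$-null set, and transport condition~(II) by the computation $\eta(x,p,x)\eta^{-1}=(y,p,y)$, which is exactly how the paper matches the three cocycle factors in \eqref{property II of twisted}. The one point you flag as an unresolved obstacle --- a measurable choice of coboundaries $b_x$ and the cancellation of residual $b_x$-terms --- is handled in the paper more simply than you anticipate: it fixes a single coboundary $b$ with $\omega_c=\delta^1 b\,\sigma_c^x$ and uses the one map $\Phi(W_p)=b(p)W_{(x,p,x)}$ fibrewise, so that in the invariance relation the same unimodular scalar $b(p)$ multiplies the terms at $x$ and at $r(\eta)$ and cancels outright; no measurable selection is needed. (Whether a single $b$ genuinely serves all fibres rests on \cite[Lemma~3.3]{KPS2}, which the paper invokes only implicitly, so your caution here is not misplaced --- but it is a feature of the paper's argument, not something your proof must build separately.) Two small corrections: Lemma~\ref{Fell-groupoid}(IV) identifies $C^*(\Gx,\sigma_c)$ with the fibre algebra of the Fell bundle, not with $C^*(\Per,\sigma_c^x)$ --- the latter identification is just the relabelling $(x,p,x)\mapsto p$ --- and Lemma~\ref{lemma:traces} is not used at this stage; it enters only in Corollary~\ref{cor:invariance} to pass from $C^*(\Per,\omega_c)$ to $C^*(Z_{\omega_c})$.
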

\begin{proof}
Fix $x\in \Lambda^\infty$ such that $\{x\}\times \Per \times \{x\}=\Gx$. Let $\delta^1 b$
be the $2$-coboundary such that $\omega_c=\delta^1 b \sigma_c^x$. Composing the
isomorphism  $W_p\mapsto b(p)W_p$ of $C^*(\Per,\omega_c)$ onto $C^*(\Per,\sigma_c^x)$ and
the isomorphism  $W_p\mapsto W_{(x,p,x)}:C^*(\Per,\sigma_c^x)\rightarrow
C^*(\Gx,\sigma_c)$,  we obtain an isomorphism $\Phi:C^*(\Per,\omega_c)\rightarrow
C^*(\Gx,\sigma_c)$ such that
\[\Phi(W_p)= b(p) W_{(x,p,x)} \text{ for all } p\in \Per. \]

Since $M$ is the only probability measure on $\Lambda^\infty$ with Radon--Nykodym cocycle
$e^{D}$, by Corollary~\ref{kms states for groupoid} it suffices to show that there is a
bijection between the fields of  tracial states  on $C^*(\Per,\omega_c)$ satisfying
\eqref{property II of twisted} and the $M$-measurable fields of  tracial states  on
$C^*(\Gx,\sigma_c)$ satisfying Corollary~\ref{kms states for groupoid} (II).

Let $\{\varphi_x\}_{x\in \Lambda^\infty}$ be an $M$-measurable field of tracial states
on $C^*(\Gx,\sigma_c)$ satisfying Corollary~\ref{kms states for groupoid} (II). Then
clearly  $\{ \varphi_x\circ\Phi\}_{x\in \Lambda^\infty}$ is a field of tracial states  on
$C^*(\Per,\omega_c)$. Applying part (II) of Corollary~\ref{kms states for groupoid} with
$\eta$ and $u=(x,p,x)$  we get
\begin{align*}
(\varphi_x\circ \Phi)(W_p)&=\varphi_x \big(b(p)W_{(x,p,x)}\big)\\
&=b(p)\sigma_c\big((y,m+p,x),\eta^{-1}\big)\sigma_c(\eta,(x,p,x))\overline{\sigma_c(\eta^{-1},\eta)}\varphi_y(W_{(y,p,y)})\\
&=\sigma_c\big((y,m+p,x),(x,p,x)\big)\sigma_c\big(\eta,(x,p,x)\big)\overline{\sigma_c(\eta^{-1},\eta)}\varphi_x\circ \Phi(W_p).
\end{align*}

Conversely let $\{\psi_x\}_{x\in \Lambda^\infty}$ be a field of states  on $C^*(\Per,\omega_c)$
satisfying \eqref{property II of twisted}.  Since $M$ is a Borel  measure  on
$\Lambda^\infty$,  for all $f\in C_c(\GG,\sigma)$, the function
\[x\mapsto\sum_{u\in \Gx}f(u)(\psi_x\circ \Phi^{-1})(W_u)=\sum_{p\in \Per}f(x,p,x)\overline{b(p)}\psi_x(W_p)\]
is continuous and hence  is $M$-measurable.  Therefore $\{\psi_x\circ \Phi^{-1}\}_{x\in
\Lambda^\infty}$ is a $M$-measurable field of tracial states on $C^*(\Gx,\sigma_c)$.

 Now applying \eqref{property II of twisted} to $\{\psi_x\}_{x\in \Lambda^\infty}$ with $\eta$ and $W_p$  we have
\begin{align*}
(\psi_x\circ \Phi^{-1})(W_u)&=\psi_x \big(\overline{b(p)}W_{p}\big)\\
&=\overline{b(p)}\sigma_c\big((y,m+p,x),\eta^{-1}\big)\sigma_c(\eta,(x,p,x))\overline{\sigma_c(\eta^{-1},\eta)}\psi_y(W_{p})\\
&=\sigma_c\big((y,m+p,x),(x,p,x)\big)\sigma_c\big(\eta,(x,p,x)\big)\overline{\sigma_c(\eta^{-1},\eta)}(\psi_y\circ \Phi^{-1})(W_{u}).
\end{align*}
\end{proof}

\subsection{KMS states and the invariance}
Given a strongly connected finite $k$-graph $\Lambda$, let $\I_\Lambda$ be the interior
of the isotropy $\Iso(\GL)$ in $\GL$. Define $\HH_\Lambda:=\GL/\I_\Lambda$ and let
$\pi:\GL\rightarrow \HH_\Lambda$ be the quotient map. Let $c\in Z^2(\Lambda,\mathbb{T})$
and let  $\PP$ be as in \eqref{partition-form}.  Suppose that   $\omega_c\in
Z^2(\Per,\T)$ is a bicharacter cohomologous to
$\sigma_c^x(p,q)=\sigma_c((x,p,x),(x,q,x))$ for all $x\in \Lambda^\infty$.   By
\cite[Lemma~3.6]{KPS2} there is a continuous
 $\widehat{Z}_{\omega_c}$-valued 1-cocycle $\tilde{r}^\sigma$ on $\HL$ such that
\[\tilde{r}^\sigma_{\pi(\gamma)}(p)=\sigma\big(\gamma,(y,p,y)\big)\sigma\big((x,m+p,y)\gamma^{-1}\big)\overline{\sigma(\gamma^{-1},\gamma)}\]
for all  $\gamma=(x,m,y)\in \GL$  and $p\in Z_{\omega_c}$. This induces an action $B$ of
$\HL$ on $\Lambda^\infty\times \widehat{Z}_{\omega_c}$ such that
\[B_{\pi(\gamma)}(s(\gamma),\chi)=\big(r(\gamma),\tilde{r}^\sigma_{\pi(\gamma)}\cdot\chi\big)\text{ for all }
\gamma\in \HL\text{ and } \chi\in \widehat{Z}_{\omega_c}.\]

\begin{cor}\label{cor:invariance}
Suppose that $\Lambda$ is a strongly connected finite $k$-graph.  Let $c\in
Z^2(\Lambda,\mathbb{T})$ and let  $\PP$ be as in \eqref{partition-form}.  Let
$\omega_c\in Z^2(\Per,\T)$ be a bicharacter cohomologous to
$\sigma_c^x(p,q)=\sigma_c((x,p,x),(x,q,x))$ for all $x\in \Lambda^\infty$.      Let
$\tau$ be the preferred dynamics on $C^*(\GL,\sigma_c)$ and let $M$ be the measure of
\eqref{k-graph-measure}. Then there is a bijection between the simplex of the KMS$_1$
states of $(C^*(\GL,\sigma_c),\tau)$  and  the set of  $M$-equivalence classes $[\psi]_M$
of tracial states $\{\psi_x\}_{x\in \Lambda^\infty}$ on $C^*(Z_{\omega_c})\cong \widehat{Z}_{\omega_c}$ that are
invariant under the action $B$, in the sense that
\[
B_{\pi(\gamma)}\big(s(\gamma),\psi_{r(\gamma)}\big)=\big(r(\gamma),\psi_{s(\gamma)}\big)\text{ for all }
\gamma\in \HL.
\]
\end{cor}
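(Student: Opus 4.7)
The plan is to combine Corollary~\ref{kms states for graph} with Lemma~\ref{lemma:traces} in order to transport the parameterisation of KMS$_1$ states from $M$-equivalence classes of fields of tracial states on $C^*(\Per,\omega_c)$ onto $M$-equivalence classes of fields of tracial states on $C^*(Z_{\omega_c})$, and then to recognise the resulting invariance condition as precisely the $B$-invariance in the statement.

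First, I invoke Corollary~\ref{kms states for graph}, which reduces the problem to showing that condition \eqref{property II of twisted} on $M$-measurable fields of tracial states on $C^*(\Per,\omega_c)$ corresponds, under the restriction map $\psi_x \mapsto \psi_x|_{C^*(Z_{\omega_c})}$, to $B$-invariance of $M$-measurable fields of tracial states on $C^*(Z_{\omega_c})$. The proof of Lemma~\ref{lemma:traces} shows that every tracial state on $C^*(\Per,\omega_c)$ vanishes on $W_p$ for $p \notin Z_{\omega_c}$ and is completely determined by its restriction to $C^*(Z_{\omega_c})$, so this restriction map is an affine bijection. It preserves $M$-measurability because $M$-measurability of $\{\psi_x\}$ is tested against $f \in C_c(\GL,\sigma_c)$, and for such $f$ the function $x \mapsto f(x,p,x)$ is continuous so supported on the traces of elements $W_p$ with $p \in Z_{\omega_c}$.

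Next, I translate \eqref{property II of twisted}. For $p \in \Per \setminus Z_{\omega_c}$ the condition is automatic since both sides vanish. For $p \in Z_{\omega_c}$ and $\eta = (y,m,x) \in (\GL)_x$ it reads
\[
\psi_{s(\eta)}(W_p) = \sigma_c(\eta,(x,p,x))\,\sigma_c((y,m+p,x),\eta^{-1})\,\overline{\sigma_c(\eta^{-1},\eta)}\,\psi_{r(\eta)}(W_p),
\]
and by the formula for $\tilde{r}^\sigma$ recorded just before the statement (from \cite[Lemma~3.6]{KPS2}), the prefactor equals $\tilde{r}^\sigma_{\pi(\eta)}(p)$. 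Hence \eqref{property II of twisted} becomes
\[
\psi_{s(\eta)}(W_p) = \tilde{r}^\sigma_{\pi(\eta)}(p)\,\psi_{r(\eta)}(W_p) \qquad \text{for all } p \in Z_{\omega_c}.
\]

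Finally, identifying a tracial state on $C^*(Z_{\omega_c}) \cong C(\widehat{Z}_{\omega_c})$ with a probability measure on $\widehat{Z}_{\omega_c}$ via Gelfand duality, the Fourier transform $p \mapsto \psi_x(W_p) = \int \chi(p)\,d\psi_x(\chi)$ is injective. The displayed identity above therefore says that $\psi_{s(\eta)}$ is the translate of $\psi_{r(\eta)}$ by the character $\tilde{r}^\sigma_{\pi(\eta)} \in \widehat{Z}_{\omega_c}$. Since $B_{\pi(\eta)}(s(\eta),\chi) = (r(\eta), \tilde{r}^\sigma_{\pi(\eta)}\cdot\chi)$ acts on the $\widehat{Z}_{\omega_c}$-fibre by exactly this translation, the condition is equivalent to
\[
B_{\pi(\eta)}(s(\eta),\psi_{r(\eta)}) = (r(\eta),\psi_{s(\eta)}) \qquad \text{for all } \eta \in \GL.
\]
The main point to verify carefully is the identification of the coefficient in \eqref{property II of twisted} with $\tilde{r}^\sigma_{\pi(\eta)}(p)$ from \cite[Lemma~3.6]{KPS2}, including the direction conventions $s(\eta)$ versus $r(\eta)$ and $\eta$ versus $\eta^{-1}$; once this bookkeeping is pinned down, everything else is a direct transfer along the isomorphisms of Lemma~\ref{lemma:traces} and Gelfand duality.
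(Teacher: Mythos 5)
Your proposal is correct and follows essentially the same route as the paper, whose entire proof is the one-line citation of Corollary~\ref{kms states for graph} and Lemma~\ref{lemma:traces}; you have simply filled in the details that the paper leaves implicit (vanishing of traces off $Z_{\omega_c}$, identification of the cocycle prefactor in \eqref{property II of twisted} with $\tilde{r}^\sigma_{\pi(\eta)}(p)$, and the Gelfand/Fourier--Stieltjes translation to $B$-invariance). Your bookkeeping of $r(\eta)$ versus $s(\eta)$ lands exactly on the invariance condition as stated, so nothing further is needed.
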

\begin{proof}
This follows from Corollary~\ref{kms states for graph} and Lemma~\ref{lemma:traces}.
\end{proof}

\subsection{A question of uniqueness for KMS$_1$ states}
If $c=1$, the results of \cite{aHLRS} show that $C^*(\GL,\sigma_1)$ has unique KMS$_1$
state if and only if it is simple (see Theorem~11.1 and Section~12 in \cite{aHLRS}).
Corollary~4.8 of \cite{KPS2} shows that $C^*(\GL,\sigma_c)$ is simple if and only if the
action $B$ of $\HL$ on $\Lambda^\infty \times \widehat{Z}_{\omega_c}$ is minimal.  So it
is natural to ask whether minimality of the action $B$ characterises the presence of a
unique KMS$_1$ state for the preferred dynamics? We have not been able to answer this
question. The following brief comments describe the difficulty in doing so.

The key point in \cite{aHLRS} that demonstrates that KMS states are parameterised by
measures on the dual of the periodicity group of the graph is the observation that in the
absence of a twist, the centrality of the copy of $C^*(\Per)$ in $C^*(\Lambda)$
can be used to show that KMS states are completely determined by their values on this
subalgebra. This, combined with Neshveyev's theorems, shows that  the field of states
$\{\psi_x\}_{x\in \Lambda^\infty}$ corresponding to a KMS state $\psi$ is, up to measure zero, a constant field
(see \cite[pages 27--28]{aHLRS}). The corresponding calculation fails in the twisted
setting.

However, we are able to show that, whether or not $\HL$ acts minimally on $\Lambda^\infty
\times \widehat{Z}_{\omega_c}$, there is an injective map from the states of
$C^*(Z_{\omega_c})$ that are invariant for the action of $\HL$ on
$\widehat{Z}_{\omega_c}$ induced by the cocycle $\tilde{r}^\sigma$ to the KMS states of
the $C^*$-algebra. It follows in particular that the Haar state on $C^*(Z_{\omega_c})$
induces a KMS state as expected.

\begin{cor}\label{cor aHLSR}
Let $\phi$ be a state on $C^*(Z_{\omega_c})$ such that $\tilde{r}_{\pi(\gamma)}\cdot
\phi=\phi$ for all $\gamma\in \HL$. Then there is a KMS$_1$ state $\psi_\phi$ of
$(C^*(\GL,\sigma),\tau)$ such that
\[
\psi_\phi(f)=\int_{\GG^{(0)}}\sum_{p\in \Per}f(x,p,x)\phi(W_p)\, dM(x)\quad\text{for all }f\in C_c(\GG,\sigma).
\]
The map $\phi\mapsto \psi_\phi$ is injective. In particular, there is a KMS$_1$ state
$\psi_{\Tr}$ of $(C^*(\GL, \sigma), \tau)$ such that
\[
\psi_{\Tr}(f) = \int_{\GG^{(0)}} f(x,0,x)\, dM(x)\quad\text{for all }f\in C_c(\GG,\sigma).
\]
\end{cor}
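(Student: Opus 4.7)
The plan is to apply Corollary~\ref{kms states for graph} (or equivalently Corollary~\ref{cor:invariance}) to the constant field of tracial states given by $\psi_x \equiv \phi$ for all $x \in \Lambda^\infty$.

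First, I would use Lemma~\ref{lemma:traces} to regard $\phi$, originally a state on $C^*(Z_{\omega_c})$, as a tracial state on $C^*(\Per, \omega_c)$ (still denoted $\phi$) via $\phi(W_p) = 0$ for $p \in \Per \setminus Z_{\omega_c}$. Setting $\psi_x := \phi$ for every $x \in \Lambda^\infty$ then produces a constant, hence trivially $M$-measurable, field of tracial states on $C^*(\Per, \omega_c)$.

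The key verification is that this constant field satisfies the invariance condition (II) of Corollary~\ref{kms states for graph}. For $\eta = (y,m,x) \in (\GL)_x$ and $p \in \Per$, we need
\[
\phi(W_p) = \sigma_c(\eta,(x,p,x))\,\sigma_c((y,m+p,x),\eta^{-1})\,\overline{\sigma_c(\eta^{-1},\eta)}\,\phi(W_p).
\]
For $p \notin Z_{\omega_c}$ both sides vanish by construction. For $p \in Z_{\omega_c}$, the product of cocycle values is precisely $\tilde{r}^\sigma_{\pi(\eta)}(p)$ by the definition of $\tilde{r}^\sigma$ recalled just before Corollary~\ref{cor:invariance}, and the identity $\tilde{r}^\sigma_{\pi(\eta)} \cdot \phi = \phi$ is our hypothesis. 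Hence Corollary~\ref{kms states for graph} produces a KMS$_1$ state $\psi_\phi$, and the integral formula for it is exactly the displayed one, since $\phi(W_p) = 0$ for $p \notin Z_{\omega_c}$ makes the sum effectively run over $Z_{\omega_c}$ only.

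Injectivity of $\phi \mapsto \psi_\phi$ is then immediate from the bijection in Corollary~\ref{kms states for graph}: distinct states $\phi_1 \neq \phi_2$ give constant fields that disagree at every point of $\Lambda^\infty$ and so define distinct $M$-equivalence classes, hence distinct KMS states. For the final assertion I would take $\phi = \Tr$ to be the canonical (Haar) trace on $C^*(Z_{\omega_c}) \cong C(\widehat{Z}_{\omega_c})$, characterised by $\Tr(W_p) = \delta_{p,0}$; since every character of $Z_{\omega_c}$ fixes $0$, this trace is automatically invariant under the $\tilde{r}^\sigma$-action, and substituting $\Tr(W_p) = \delta_{p,0}$ into the integral formula for $\psi_\phi$ collapses the sum to the single term $p = 0$, yielding the stated formula for $\psi_{\Tr}$. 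There is no substantive obstacle here: once Corollary~\ref{cor:invariance} is in hand, the argument is just a translation of the hypothesis on $\phi$ into condition (II) of Corollary~\ref{kms states for graph} applied to a constant field, combined with bookkeeping around the trace-space isomorphism of Lemma~\ref{lemma:traces}.
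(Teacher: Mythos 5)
Your proposal is correct and follows essentially the same route as the paper: both take the ($M$-a.e.) constant field $\psi_x\equiv\phi$, feed it into Corollary~\ref{kms states for graph}/Corollary~\ref{cor:invariance}, and obtain the Haar-trace state as the special case $\phi=\Tr$. The only (harmless) cosmetic difference is that the paper sets $\psi_x=0$ on the $M$-null set where $\Gx\neq\{x\}\times\Per\times\{x\}$, whereas you use the genuinely constant field; these agree $M$-almost everywhere, and your explicit verification that the invariance hypothesis on $\phi$ translates into condition \eqref{property II of twisted} is exactly the step the paper leaves implicit.
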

\begin{proof}
For each $x\in \Lambda^\infty$ define.
\begin{align*}
\psi_x=\begin{cases}
\phi&\text{if $\{x\}\times \Per\times \{x\} = \Gx$}\\
	0&\text{if $\{x\}\times \Per\times \{x\} \neq \Gx$.}\\
	\end{cases}
\end{align*}
Then $\psi_\phi := \Theta(M, \{\psi_x\}_{x\in \Lambda^\infty})$ satisfies the desired formula. The first
statement, and injectivity of $\phi \mapsto \psi_\phi$ follows from
Corollary~\ref{cor:invariance}. The final statement follows from the first statement
applied to the Haar trace $\Tr$ on $C^*(Z_{\omega_c})$.
\end{proof}

\begin{remark}
Suppose that $\HL$ acts minimally on $\Lambda^\infty \times \widehat{Z}_{\omega_c}$. Then
in particular the induced action $\tilde{B}$ of $\HL$ on $\widehat{Z}_{\omega_c}$ is
minimal. So if $\phi$ is a state of $C^*(Z_{\omega_c})$ that is invariant for $\tilde{B}$
as in Corollary~\ref{cor aHLSR}, then continuity ensures that the associated measure is
invariant for translation in $Z_{\omega_c}$, so must be equal to the Haar measure. So to
prove that $\psi_{\Tr}$ is the unique KMS$_1$-state when $C^*(\Lambda, c)$ is simple, it
would suffice to show that the map $\phi \mapsto \psi_\phi$ of Corollary~\ref{cor aHLSR}
is surjective.

One approach to this would be to establish that if $\{\psi_x\}_{x\in \Lambda^\infty}$ is an $M$-measurable
field of tracial states on $C^*(Z_{\omega_c})$, then the state $\phi$ given by $\phi :=
\int_{\Lambda^\infty} \psi_x \,dM(x)$ is $\tilde{B}$-invariant and satisfies $\psi_\phi =
\Theta(M, \{\psi\}_{x\in\Lambda^\infty})$, but we have not been able to establish either.
\end{remark}


\end{document}